\newtheorem{thm}{Theorem}[section]
\newtheorem*{thm*}{Theorem}
\newtheorem{conj*}{Conjecture}
\newtheorem{lemma}[thm]{Lemma}
\newtheorem*{lemma*}{Lemma}
\newtheorem{prop}[thm]{Proposition}
\newtheorem*{prop*}{Proposition}
\newtheorem{cor}[thm]{Corollary}
\newtheorem{fact}[thm]{Fact}
\theoremstyle{definition}
\newtheorem{df}[thm]{Definition}
\theoremstyle{remark}
\newtheorem*{rmk}{Remark}
\newtheorem*{claim*}{Claim}
\newcommand{\bbQ}{\mathbb{Q}}
\newcommand{\bbF}{\mathbb{F}}
\newcommand{\bbZ}{\mathbb{Z}}
\newcommand{\bbP}{\mathbb{P}}
\newcommand{\bbR}{\mathbb{R}}
\newcommand{\QQ}{\bbQ}
\newcommand{\RR}{\bbR}
\newcommand{\FF}{\bbF}
\newcommand{\ZZ}{\bbZ}
\newcommand{\PP}{\bbP}
\newcommand{\cO}{\mathcal{O}}
\newcommand{\cH}{\mathcal{H}}
\newcommand{\fp}{\mathfrak p}
\DeclareMathOperator{\Gal}{Gal}
\DeclareMathOperator{\car}{char}
\DeclareMathOperator{\tr}{tr}
\DeclareMathOperator{\ord}{ord}
\DeclareMathOperator{\img}{img}
\DeclareMathOperator{\SL}{SL}
\DeclareMathOperator{\PSL}{PSL}
\DeclareMathOperator*{\mysum}{\sum}
\newcommand{\emphh}[2][ ]{%
\ifthenelse{\equal{#1}{ }}{\index{default}{#2} {\emph{#2}}}{\index{default}{#1@#2} {\emph{#2}}}%
}
\def\sumprime{\mathop{\sum{\raise3pt\hbox{${}'$}}}} 
\newcommand{\tto}[1]{%
\ifthenelse{\equal{#1}{}}{\to}{\stackrel{#1}{\to}}}
\newcommand{\fixme}[1]{}
\begin{document}

\title[Covers of triangular modular curves and their Galois groups]{Congruence covers of triangular modular curves and their Galois groups}

\author{Luiz Kazuo Takei}

\address{Department of Mathematics, John Abbott College, Sainte-Anne-de-Bellevue, QC, Canada}

\email{luiz.kazuotakei@johnabbott.qc.ca}

\maketitle

\vspace{\baselineskip}

\begin{abstract}
This article computes the Galois groups of congruence covers arising in the context of certain hyperbolic triangle groups. As a consequence of this computation, the genera of the respective curves are deduced.
\end{abstract}

\section{Introduction}

The classical modular group $\SL_2(\ZZ)$ has been extensively studied. In particular, the Galois group of the cover
\[
	\begin{array}{ccc}
		X(p) & \longrightarrow & X(1),
	\end{array}
\]
and the genera of the modular curves $X(p)$ and $X_0(p)$ are all well-known. This article answers the analogous question when the group $\SL_2(\ZZ)$ is replaced by the triangle groups $\Gamma_{q, \infty, \infty}$.

The main results are the computation of the Galois group of the cover
\[
	\begin{array}{ccccc}
		\varphi & : & X_{q, \infty, \infty}(\fp) & \longrightarrow & X_{q, \infty, \infty}(1)
	\end{array}
\]
found in Theorem \ref{thm:quotient_gp}, which is used to deduce the genus of $X_{q, \infty, \infty}(p)$ in Proposition \ref{prop:genus} and finally the computation of the genus of $X_{q, \infty, \infty}^{(0)}(p)$ in Proposition \ref{prop:genus_of_X^0}.

\subsection*{Acknowledgements:} I am grateful to Henri Darmon, Eyal Goren and John Voight for helpful discussions and advice.

\section{Basic definitions and notations}

In this section, a particular family of triangle groups and some important subgroups are defined. For more details and a more general description, cf. section 2 of \cite{takei2012}.

For $q \in \ZZ_{\geq 2}$, the \emph{triangle group} $\Gamma_{q, \infty, \infty}$ is defined to be the subgroup of $\SL_2(\RR)$ generated by
\[
    \gamma_1 = \begin{pmatrix} - 2 \cos(\pi / q) & -1 \\ 1 & 0 \end{pmatrix} \ , \ \gamma_2 = \begin{pmatrix} 0 & 1 \\ -1 & 2 \end{pmatrix},
\]
\[
    \gamma_3 = \begin{pmatrix} 1 & 2 + 2 \cos(\pi/q) \\ 0 & 1 \end{pmatrix}.
\]

In what follows, whenever $\Gamma \subseteq \SL_2(\RR)$, the group $\overline{\Gamma} \subseteq \PSL_2(\RR)$ will denote the image of $\Gamma$ in $\PSL_2(\RR)$.

It is a fact that $\Gamma_{q, \infty, \infty}$ is a Fuchsian group and, moreover, $\left. \overline{\Gamma_{q, \infty, \infty}} \right\backslash \cH^{*} \cong \PP^1$ where $\cH^* = \cH \cup \{ \textrm{cusps of } \Gamma_{q, \infty, \infty} \}$.

Note that $\Gamma_{q, \infty, \infty}$ is a subgroup of $\SL_2(\cO)$, where $\cO = \SL_2(\ZZ[\zeta_{2q} + \zeta_{2q}^{-1}])$ and $\zeta_n = \exp(2 \pi i / n)$.

\begin{df}
    Given a prime ideal $\fp$ of $\cO$, the \textit{congruence subgroups} of $\Gamma_{q, \infty, \infty}$ with level $\fp$ are defined to be
    \[
        \Gamma_{q, \infty, \infty}(\fp) = \left\{ M \in \Gamma_{q, \infty, \infty} \ \left| \ M \equiv \begin{pmatrix} 1 & 0 \\ 0 & 1 \end{pmatrix} \pmod{\fp} \right. \right\}, \textrm{ and }
    \]
    \[
        \Gamma_{q, \infty, \infty}^{(0)}(\fp) = \left\{ M \in \Gamma_{q, \infty, \infty} \ \left| \ M \equiv \begin{pmatrix} * & * \\ 0 & * \end{pmatrix} \pmod{\fp} \right. \right\}.
    \]
\end{df}

\begin{rmk}
	The classical modular group $\SL_2(\ZZ)$ is also a triangle group (in fact, it is the triangle group $\Gamma_{2, 3, \infty}$) and its congruence subgroups as defined above are simply the well known congruence subgroups of $\SL_2(\ZZ)$. For more details, cf. section 2 of \cite{takei2012}.
\end{rmk}

\begin{rmk}
	Note that, unlike $\SL_2(\ZZ)$, almost all of the groups defined above are not arithmetic (cf. \cite{takeuchi1977}).
\end{rmk}

In analogy with the classical case, the \emph{triangular modular curves} associated to those groups are defined as follows:
\begin{equation}
	\begin{array}{cc}
	    X_{q, r, \infty} := \Gamma_{q, r, \infty} \backslash \cH^*, \\
	    X_{q, r, \infty}(\fp) := \Gamma_{q, r, \infty}(\fp) \backslash \cH^*, \\
	    X_{q, r, \infty}^{(0)}(\fp) := \Gamma_{q, r, \infty}^{(0)}(\fp) \backslash \cH^*.
	\end{array}
\label{eqn:def_of_triangular_modular_curves}
\end{equation}

As mentioned in the introduction, one of the goals of this article is the computation of the Galois group of the cover
\[
	\begin{array}{ccccc}
		\varphi & : & X_{q, \infty, \infty}(\fp) & \longrightarrow & X_{q, \infty, \infty}.
	\end{array}
\]

Throughout this article, the following notation will be used:
\[
	\lambda_q = \zeta_{2q} + \zeta_{2q}^{-1} \textrm{ and } \ \mu_q = 2 + \lambda_q
\]
so that
\[
	\gamma_1 = \begin{pmatrix} - \lambda_q & -1 \\ 1 & 0 \end{pmatrix} \ , \ \gamma_2 = \begin{pmatrix} 0 & 1 \\ -1 & 2 \end{pmatrix} \ , \textrm{ and } \gamma_3 = \begin{pmatrix} 1 & \mu_q \\ 0 & 1 \end{pmatrix}.
\]

Moreover,
\begin{equation}
    \label{eqn:rho_map}
    \rho: \Gamma_{q, \infty, \infty} \longrightarrow \SL_2 \left( \frac{\ZZ[\lambda_q]}{\fp} \right)
\end{equation}
denotes the map which sends each matrix to the matrix with reduced entries. It is easy to see that the kernel of this map is exactly $\Gamma_{q, \infty, \infty} (\fp)$. In particular, the group $\Gamma_{q, \infty, \infty} (\fp)$ is normal in $\Gamma_{q, \infty, \infty}$.

\section{Galois group of $\varphi$}

\subsection{Special linear groups over finite fields}

In this section we will use the facts below. Their proofs can either be found in the given reference or be easily deduced.

\begin{fact}
    \label{fct:sl2_order}
    For any prime $p$, we have $|\SL_2(\FF_{p^m})| = (p^m+1)p^m(p^m-1)$.
\end{fact}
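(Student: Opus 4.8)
The plan is to obtain the formula from the standard count of $|\GL_2(\FF_{p^m})|$ together with the determinant map; write $s = p^m$ for the cardinality of $\FF_{p^m}$.

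First I would count $\GL_2(\FF_{p^m})$ directly: an element of $\GL_2(\FF_{p^m})$ is exactly an ordered basis of the $\FF_{p^m}$-vector space $\FF_{p^m}^2$, so its first column may be any of the $s^2 - 1$ nonzero vectors and its second column any of the $s^2 - s$ vectors outside the line spanned by the first, giving $|\GL_2(\FF_{p^m})| = (s^2 - 1)(s^2 - s)$. Next I would invoke the determinant homomorphism $\det \colon \GL_2(\FF_{p^m}) \to \FF_{p^m}^{\times}$, whose kernel is by definition $\SL_2(\FF_{p^m})$ and which is surjective because $\mtx{t}{0}{0}{1}$ has determinant $t$ for every $t \in \FF_{p^m}^{\times}$. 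Since $|\FF_{p^m}^{\times}| = s - 1$, the first isomorphism theorem yields
\[
    |\SL_2(\FF_{p^m})| = \frac{|\GL_2(\FF_{p^m})|}{|\FF_{p^m}^{\times}|} = \frac{(s^2-1)(s^2-s)}{s-1},
\]
and factoring $(s^2-1)(s^2-s) = (s-1)(s+1)\cdot s(s-1)$ and cancelling one factor of $s - 1$ gives $(s+1)\,s\,(s-1)$. Substituting $s = p^m$ proves the claim.

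I do not expect any real obstacle here: the statement is elementary, and the only steps needing (minimal) care are the surjectivity of the determinant and the polynomial cancellation. As an alternative one could count the tuples $(a,b,c,d)$ with $ad - bc = 1$ directly --- for each of the $s^2 - 1$ nonzero pairs $(a,c)$ the equation in $(b,d)$ is a nondegenerate affine-linear equation with exactly $s$ solutions, again yielding $(s^2-1)s = (s+1)s(s-1)$ --- but routing through $\GL_2$ is the cleanest write-up.
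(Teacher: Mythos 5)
Your proof is correct and is the standard argument; the paper states this as a fact without proof (deferring to the reference or to "easily deduced"), and the count of $\GL_2(\FF_{p^m})$ by ordered bases followed by the determinant exact sequence is exactly the intended easy deduction. Both your main route and your alternative direct count of solutions to $ad-bc=1$ check out.
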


\begin{fact}
    \label{fct:sl_presn}
    A presentation for $\SL_2(\FF_{5})$ is given by
    \[
    	\SL_2(\FF_{5}) = \langle x, y \thickspace | \thickspace x^5 = y^3 = (x y)^4 = 1 \rangle.
    \]
    (cf. Example 4, Section 6, Chapter 2 in \cite{suzuki1982})
\end{fact}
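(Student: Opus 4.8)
The plan is to prove the isomorphism by the standard two-step method for verifying a presentation of a finite group. Write $G=\langle x,y\mid x^{5}=y^{3}=(xy)^{4}=1\rangle$ for the abstractly presented group. First I would exhibit a surjection $G\twoheadrightarrow\SL_2(\FF_5)$, and then I would bound $|G|$ above by $|\SL_2(\FF_5)|$, which by Fact~\ref{fct:sl2_order} equals $(5+1)\cdot 5\cdot(5-1)=120$; the two together force the surjection to be an isomorphism.

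For the surjection, take the matrices $u=\smtx{1}{1}{0}{1}$ and $v=\smtx{0}{-1}{1}{-1}$ in $\SL_2(\FF_5)$. Here $u$ has order $5$ because the characteristic is $5$; the characteristic polynomial of $v$ is $t^{2}+t+1$, so $v^{2}=-v-I$ and hence $v^{3}=I$; and $uv=\smtx{1}{-2}{1}{-1}$ has trace $0$, so $(uv)^{2}=-I$ and $(uv)^{4}=I$. Thus the relations of $G$ hold for $u,v$, so $x\mapsto u$, $y\mapsto v$ defines a homomorphism $\pi\colon G\to\SL_2(\FF_5)$. To see $\pi$ is onto, observe that $\langle u,v\rangle\subseteq\SL_2(\FF_5)$ has order divisible by $\lcm(5,3,4)=60$, hence equal to $60$ or $120$; but $\SL_2(\FF_5)$ is perfect and so has no subgroup of index $2$, whence $\langle u,v\rangle=\SL_2(\FF_5)$.

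It remains to show $|G|\le 120$. I would attempt this by Todd--Coxeter coset enumeration for $G$ relative to $\langle x\rangle$: since $x$ has order $5$ in $G$ (as $x^{5}=1$ there while $\pi(x)=u$ has order $5$), an enumeration that closes up at $24$ cosets would give $|G|=24\cdot 5=120$, and together with the surjection $\pi$ this finishes the proof. The reference \cite{suzuki1982} establishes this bound directly.

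I expect this last step, the upper bound on $|G|$, to be the main obstacle: the coset enumeration is entirely finite and elementary, but actually carrying it out---and seeing in advance that it terminates at the expected index---takes genuine care. A lesser point to get right is the generation claim used for surjectivity, namely that $u$ and $v$ generate all of $\SL_2(\FF_5)$ rather than a proper subgroup; the key input there is that $\SL_2(\FF_5)$ is perfect, equivalently that $\PSL_2(\FF_5)\cong A_5$ is nonabelian simple.
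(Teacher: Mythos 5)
Your first half is fine: the matrices $u=\smtx{1}{1}{0}{1}$ and $v=\smtx{0}{-1}{1}{-1}$ do satisfy the three relations and do generate $\SL_2(\FF_5)$, so the presented group $G$ surjects onto $\SL_2(\FF_5)$. The gap is in the second half, and it is not merely a hard computation: the bound $|G|\le 120$ is false. Setting $z=(xy)^{-1}$ rewrites the presentation as $x^5=y^3=z^4=xyz=1$, so $G$ is the von Dyck (rotation) triangle group of signature $(3,4,5)$; since $\tfrac13+\tfrac14+\tfrac15=\tfrac{47}{60}<1$, this group acts properly discontinuously and cocompactly on the hyperbolic plane and is therefore infinite. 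Consequently $[G:\langle x\rangle]$ is infinite, the Todd--Coxeter enumeration you propose will never close up at $24$ cosets, and the step you flagged as ``the main obstacle'' is in fact impossible. What your argument actually establishes is only that $\SL_2(\FF_5)$ is a homomorphic image of $G$.

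So the statement is wrong as printed, not just hard to prove. Compare: the spherical relations $x^2=y^3=(xy)^5=1$ present $A_5\cong\PSL_2(\FF_5)$ of order $60$, while the standard presentation of $\SL_2(\FF_5)$ as the binary icosahedral group is $\langle x,y\mid x^5=y^3=(xy)^2\rangle$, \emph{without} setting these elements equal to $1$ (their common value is the central involution $-I$). The paper gives no proof---it defers entirely to \cite{suzuki1982}---so either the citation is being misquoted or additional relations have been dropped. This matters downstream: in the final claim of the proof of Corollary \ref{cor:group_gend}, Facts \ref{fct:sl_presn} and \ref{fct:general_presn} are invoked to conclude that a $(5,3,4)$-generated group of order $120$ is isomorphic to $\SL_2(\FF_5)$; with the presentation corrected to an infinite group, that conclusion needs a separate argument rather than a comparison of orders.
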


\begin{fact}
    \label{fct:general_presn}
    Let $\langle X \thickspace | \thickspace R \rangle$ be a presentation of a group $G$. If $R'$ is another set of relations, then the group $H = \langle X \thickspace | \thickspace R \cup R'  \rangle$ is a homomorphic image of $G$. (cf. Result 6.7, Chapter 2 in \cite{suzuki1982})
\end{fact}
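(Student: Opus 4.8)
The plan is to deduce this from the universal property of group presentations. Write $F = F(X)$ for the free group on the set $X$, so that the presentation $\langle X \thickspace | \thickspace R \rangle$ of $G$ amounts to a fixed isomorphism $G \cong F / \llangle R \rrangle$, where $\llangle R \rrangle$ denotes the normal closure of $R$ in $F$; concretely, $G$ is characterized by the property that any set map from $X$ into a group $K$ under which every word of $R$ becomes trivial extends uniquely to a homomorphism $G \to K$.

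First I would apply this with $K = H$: by construction the images in $H$ of the elements of $X$ satisfy every relation in $R \cup R'$, hence a fortiori every relation in $R \subseteq R \cup R'$, so the universal property yields a homomorphism $\psi \colon G \to H$ carrying each generator of $G$ to the corresponding generator of $H$. Since the image of $\psi$ contains a generating set of $H$, the map $\psi$ is surjective, which is exactly the assertion that $H$ is a homomorphic image of $G$. Equivalently, one can argue directly with normal closures: the inclusion $R \subseteq R \cup R'$ forces $\llangle R \rrangle \subseteq \llangle R \cup R' \rrangle$ inside $F$, and the induced map on quotients $F / \llangle R \rrangle \to F / \llangle R \cup R' \rrangle$ is the desired surjection.

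There is no genuine obstacle here; this is the standard principle that imposing additional relations can only collapse a group, never enlarge it. The one point worth stating carefully is that the symbols in $X$ play a double role — as generators of $G$ and of $H$ — and that $\psi$ is precisely the map matching them up; everything else is formal. This is Result 6.7, Chapter 2 of \cite{suzuki1982}.
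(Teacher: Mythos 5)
Your argument is correct and is exactly the standard one: since $R \subseteq R \cup R'$, the normal closure $\llangle R \rrangle$ is contained in $\llangle R \cup R' \rrangle$ inside the free group $F(X)$, so the quotient map $F/\llangle R\rrangle \to F/\llangle R\cup R'\rrangle$ gives the desired surjection $G \twoheadrightarrow H$. The paper does not prove this fact itself but defers to Suzuki (Result 6.7, Chapter 2), and your proof is precisely the argument given there, so nothing further is needed.
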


We state a theorem due to Dickson (Theorem 6.17, Chapter 3 in \cite{suzuki1982}).

\begin{thm}
\label{thm:dickson}
    Let $F$ be an algebraically closed field of characteristic $p \geq 2$ and $G$ be a finite subgroup of $\SL_2(F)$ such that $|G|$ is divisible by $p$ and $G$ admits at least two Sylow $p$-subgroups of order $p^r$. Then $G$ is isomorphic to one of the following groups:
    \begin{enumerate}[(i)]
        \item $p = 2$ and $G$ is dihedral of order $2n$ where $n$ is odd
        \item $p = 3$ and $G \cong \SL_2(\FF_{5})$
        \item $\SL_2(K)$
        \item $\left\langle \SL_2(K), d_{\pi} =
                                    \left(
                                    \begin{array}{c c}
                                        \pi & 0 \\
                                        0 & \pi^{-1}
                                    \end{array}
                                    \right) \right\rangle$
    \end{enumerate}
    where $K$ is a field of $p^r$ elements and $\pi$ is an element such that $K(\pi)$ is a field of $p^{2r}$ elements and $\pi^2$ is a generator of $K^{\times}$.
\end{thm}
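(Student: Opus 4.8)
Full details are in \cite{suzuki1982}; I indicate the strategy I would follow. The plan is to study $G$ through its action on $\PP^1(F)$. Since $\car F = p$, every element of $G$ of $p$-power order is unipotent — its only eigenvalue is a $p$-power root of $1$, hence $1$ — so it fixes exactly one point of $\PP^1(F)$, and conjugating a finite $p$-subgroup into upper-triangular form places it inside the unipotent radical $\bigl\{\smat{1}{*}{0}{1}\bigr\} \cong (F,+)$ of a Borel. Hence each Sylow $p$-subgroup $P$ of $G$ is elementary abelian, all are conjugate of a common order $p^{r}$, and $P$ has a unique fixed point $\omega_{P} \in \PP^1(F)$. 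I would first record that distinct Sylow $p$-subgroups meet trivially: a common nontrivial unipotent $u$ would force both of them into the abelian unipotent part of $C_{\SL_{2}(F)}(u)$, contradicting maximality.

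Next, put $\Omega = \{\omega_{P} : P \in \mathrm{Syl}_{p}(G)\}$. The hypothesis that $G$ has at least two Sylow $p$-subgroups says $N_{G}(P) \neq G$, so $n := |\Omega| = |\mathrm{Syl}_{p}(G)|$ satisfies $n \equiv 1 \pmod p$ and $n \geq p+1$. Now $G$ acts transitively on $\Omega$ with point stabilizer $N_{G}(P)$, and since $N_{G}(P)$ lies in the Borel fixing $\omega_{P}$ it has the shape $P \rtimes C$ with $C$ cyclic of order $c$ prime to $p$ (a subgroup of the diagonal torus $\cong F^{\times}$). The trivial-intersection property gives exactly $n(p^{r}-1)$ elements of $p$-power order $> 1$ in $G$, hence $|G| - n(p^{r}-1)$ of order prime to $p$; combined with $|G| = n\,p^{r}c$ this pins $|G|$ down and tightly constrains the triple $(n, p^{r}, c)$.

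The identification of $G$ then proceeds by cases. If $c = 1$ then $N_{G}(P) = P$, and since $P$ is abelian and self-normalizing it equals its own centralizer, so Burnside's normal $p$-complement theorem gives $G = N \rtimes P$ with $N$ a normal $p$-complement; a short analysis — noting that for $p$ odd the central involution $-I$ would have to lie in the self-normalizing $P$, and that $p'$-elements of $\SL_{2}(\bar F)$ are semisimple — forces $p = 2$ and $N$ cyclic, which is case (i). If $c \geq 2$, I would pick two distinct Sylow $p$-subgroups, conjugate so that one fixes $0$ and the other $\infty$, observe that conjugacy identifies them with the same additive subgroup of $F$, and use the scalar action of the torus $C$ to force that subgroup to be a subfield $K$ with $|K| = p^{r}$; then $L := \langle \mathrm{Syl}_{p}(G) \rangle$ contains $\bigl\langle \smat{1}{K}{0}{1}, \smat{1}{0}{K}{1} \bigr\rangle = \SL_{2}(K)$, and the Sylow and numerical constraints force $L = \SL_{2}(K)$ with $\Omega = \PP^1(K)$. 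Since $G$ normalizes $L$, a computation of $N_{\SL_{2}(F)}(\SL_{2}(K))$ yields $G \in \{\SL_{2}(K),\ \langle \SL_{2}(K), d_{\pi} \rangle\}$ — cases (iii) and (iv) — except for one sporadic configuration in characteristic $3$, where the numerics instead supply generators $x, y$ of $G$ with $x^{5} = y^{3} = (xy)^{4} = 1$, so that Facts \ref{fct:sl_presn} and \ref{fct:general_presn} give $|G| \leq 120$, matched by the counting lower bound, whence $G \cong \SL_{2}(\FF_{5})$ — case (ii).

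The \emph{main obstacle} is this last identification step. Everything before it is Sylow theory plus the elementary geometry of $\PP^1$; the genuine work is recognizing, from the permutation representation on $\Omega$ and the numerical constraints, that $G$ is squeezed between $\PSL_{2}(K)$ and $\PGL_{2}(K)$, lifting that recognition correctly back into $\SL_{2}(F)$, and cleanly isolating the characteristic-$3$ sporadic group. This is where Dickson's original delicate count of involutions and of elements lying in maximal tori — or, in a modern treatment, the classification of Zassenhaus (sharply $3$-transitive) groups — does the heavy lifting.
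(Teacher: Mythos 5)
The paper does not prove this statement: it is quoted verbatim as Dickson's theorem with a pointer to Theorem 6.17, Chapter 3 of \cite{suzuki1982}, so there is no in-paper argument to measure your attempt against. Your outline is, in fact, the architecture of the proof in the cited source (and of most modern treatments): Sylow $p$-subgroups are elementary abelian unipotent groups with a unique fixed point on $\PP^1(F)$ and pairwise trivial intersection; $N_G(P)=P\rtimes C$ with $C$ cyclic of $p'$-order inside a torus; the element count $|G|=n\,p^{r}c$ together with $n(p^{r}-1)$ nontrivial $p$-elements; and a case split on $c$ leading to the dihedral, $\SL_2(K)$, $\langle\SL_2(K),d_\pi\rangle$, and sporadic $\SL_2(\FF_5)$ possibilities. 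All of the preliminary reductions you state are correct and complete enough to check (the trivial-intersection argument, the identification of $n$ with the number of fixed points, the structure of $N_G(P)$).

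That said, as written this is a summary of the known proof rather than a proof: every step that actually does work is asserted. Concretely: (1) in the case $c=1$ you assert that a "short analysis" forces $p=2$ and $N$ cyclic, but this is where one must rule out, e.g., $A_4$-type configurations using the uniqueness of the involution $-I$ in $\SL_2(F)$ for $p$ odd, and it deserves to be written out; (2) the claim that conjugacy identifies the two opposite unipotent subgroups "with the same additive subgroup of $F$" is not literally true without first rescaling by a diagonal element of $\SL_2(F)$ not necessarily in $G$, and the subsequent step that the torus action forces that additive subgroup to be a subfield $K$ is the technical heart of Dickson's argument; (3) the computation of $N_{\SL_2(F)}(\SL_2(K))$ and the isolation of the characteristic-$3$ sporadic case are named but not carried out. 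None of these is a wrong turn --- each is fillable and is filled in \cite{suzuki1982} --- but if the intent were to replace the citation by a self-contained proof, these three points are where the real content lives and they are currently missing. As a gloss on the cited theorem, consistent with how the paper itself uses it (as a black box feeding Corollary \ref{cor:group_gend}), your account is accurate.
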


This allows us to prove the following:

\begin{cor}
    \label{cor:group_gend}
    Let $p \geq 3$ be a prime number and $E = \FF_p(z)$ be the field with $p^m$ elements, where $z \neq 0$. Let $G$ be the subgroup of $\SL_2(E)$ generated by
    \[
        \left(
        \begin{array}{c c}
            0 & 1 \\
            -1 & 2
        \end{array}
        \right)
        \textrm{ \ \ \ and \ \ \ }
        \left(
        \begin{array}{c c}
            1 & z \\
            0 & 1
        \end{array}
        \right)
    \]
    Then,
    \begin{enumerate}[(i)]
        \item $G \cong \SL_2(\FF_{5})$ if $p = 3$ and $z^2 = 2$
        \item $G \cong \SL_2(E)$ otherwise.
    \end{enumerate}
\end{cor}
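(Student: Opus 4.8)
The plan is to apply Dickson's theorem (Theorem \ref{thm:dickson}) to the group $G$, viewed inside $\SL_2(\overline{\FF_p})$. First I would verify the hypotheses: the matrix $\smtx{1}{z}{0}{1}$ has order $p$ (since $z\neq 0$ and $\car E = p$), so $p \mid |G|$; and $G$ contains at least two distinct Sylow $p$-subgroups --- the upper-triangular unipotent one generated by $\smtx{1}{z}{0}{1}$ and a conjugate of it --- because $G$ is not contained in a single Borel (the generator $\gamma_2 = \smtx{0}{1}{-1}{2}$ does not stabilize the line fixed by $\smtx{1}{z}{0}{1}$). So $G$ falls into one of the four cases (i)--(iv) of Theorem \ref{thm:dickson}. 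Case (i) is excluded since $p\geq 3$. In cases (iii) and (iv) we get $G \cong \SL_2(K)$ or the slightly larger group, for some subfield $K = \FF_{p^r} \subseteq E$; I would then argue that the entries of the two generators already generate all of $E = \FF_p(z)$ over $\FF_p$, and that the two generators lie in $\SL_2(\FF_p(z))$, forcing $K = E$ and ruling out case (iv) (whose elements have determinant-$1$ entries but require an element $\pi$ with $\pi^2$ generating $K^\times$, incompatible with everything living in $\SL_2(E)$ already when we track traces). This gives $G \cong \SL_2(E)$ in all cases except possibly case (ii).

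For case (ii), where $p = 3$ and $G \cong \SL_2(\FF_5)$: here I would pin down exactly when this occurs using the presentation facts. By Fact \ref{fct:sl_presn}, $\SL_2(\FF_5) = \langle x,y \mid x^5 = y^3 = (xy)^4 = 1\rangle$, which has order $120$. One checks directly that over $\FF_3$, the element $u = \smtx{1}{z}{0}{1}$ has order $3$ and $\gamma_2 = \smtx{0}{1}{-1}{2}$ has order... I would compute the orders of $\gamma_2$, $u$, and the product, and compare with the constraint that $G$ must be $\SL_2(\FF_5)$, i.e. of order $120$ and not all of $\SL_2(E)$. Since $|\SL_2(\FF_{3^m})| = (3^m+1)3^m(3^m-1)$ by Fact \ref{fct:sl2_order}, equality $|G| = 120$ with $G \subseteq \SL_2(\FF_{3^m})$ is only possible if $3^m$ is small; checking $m=1$ gives $|\SL_2(\FF_3)| = 24 < 120$, so we genuinely need $m \geq 2$, and then $z \notin \FF_3$. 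The claim is that the $\SL_2(\FF_5)$ case happens precisely when $z^2 = 2$, i.e. $z = \pm\sqrt{2}$ generating $\FF_9$. To prove this I would: (a) show that if $z^2 = 2$ then $G \subsetneq \SL_2(\FF_9)$ and $G$ satisfies the three relations $x^5 = y^3 = (xy)^4 = 1$ for an appropriate choice of generators $x,y$ expressed in terms of $\gamma_2$ and $u$ (so $G$ is a quotient of $\SL_2(\FF_5)$ by Fact \ref{fct:general_presn}), then check $|G| > 1$ and that $\SL_2(\FF_5)$ is "almost simple" enough that the only such quotient is $\SL_2(\FF_5)$ itself; (b) conversely, show that if $z^2 \neq 2$ (including all $m\geq 2$ with $z^2\neq 2$, and trivially handled sub-cases), the relations forcing order $120$ fail, so Dickson forces $G = \SL_2(E)$.

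The main obstacle I anticipate is part (a) of the $p=3$ analysis: exhibiting the isomorphism $G \cong \SL_2(\FF_5)$ concretely when $z^2 = 2$. This requires finding elements of $G \subseteq \SL_2(\FF_9)$ playing the roles of $x$ (order $5$), $y$ (order $3$), with $xy$ of order $4$, built from $\gamma_2 = \smtx{0}{1}{-1}{2}$ and $u = \smtx{1}{z}{0}{1}$ with $z = \sqrt 2 \in \FF_9$ --- and then verifying, via Fact \ref{fct:general_presn} together with the fact that $\SL_2(\FF_5)$ has no proper quotient of order $>2$ (its only proper normal subgroup is the center $\{\pm I\}$, and $\PSL_2(\FF_5) \cong A_5$ is simple), that $G$ is forced to be all of $\SL_2(\FF_5)$ rather than a smaller quotient. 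The trace computations in $\FF_9$ are elementary but need care since $2 = -1$ in $\FF_3$ makes several matrices coincide or degenerate. Everything else --- the verification of Dickson's hypotheses and the elimination of cases (i) and (iv) and the field-generation argument for $K = E$ --- is routine.
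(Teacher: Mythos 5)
Your outline follows the same skeleton as the paper (verify the two-Sylow hypothesis, invoke Dickson's theorem, then treat the exceptional $\SL_2(\FF_5)$ possibility via the presentation facts), but the step you defer as ``the main obstacle'' is exactly where the mathematical content lies, and your sketch of it would not work. For your part (b) --- showing that $p=3$ together with $G\cong\SL_2(\FF_5)$ forces $z^2=2$ --- you propose to show that ``the relations forcing order $120$ fail.'' A presentation is attached to a particular choice of generators; the failure of $x^5=y^3=(xy)^4=1$ for one pair of words in $\gamma_2$ and $u_z$ says nothing about whether $G$ is abstractly isomorphic to $\SL_2(\FF_5)$. What the paper actually does is extract an invariant: if $G\cong\SL_2(\FF_5)$ then $G/Z(G)\cong A_5$, the images of $v=\gamma_2$ and $u_z^{-1}$ are $3$-cycles whose supports must together cover $\{1,\dots,5\}$, so their product is a $5$-cycle; hence $vu_z^{-1}$ has order $5$ or $10$, and its trace $z+2$ must equal $\pm(x+x^{-1})$ for $x$ a primitive fifth root of unity over $\FF_3$. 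This yields $z^2=2$ or $z^2=z+1$, and the latter is excluded by checking (the paper does this by computer) that it gives $|G|=720=|\SL_2(\FF_9)|$. Some argument of this kind, converting the abstract isomorphism into a constraint on $\tr(vu_z^{-1})$, is indispensable, and nothing in your proposal supplies it.

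Two further points. Your elimination of Dickson's case (iv) ``because everything already lives in $\SL_2(E)$'' is not a valid reason: $\left\langle \SL_2(K), d_\pi\right\rangle$ is a subgroup of $\SL_2(K(\pi))$ and can perfectly well sit inside $\SL_2(E)$. The paper's argument is purely group-theoretic: $G$ is generated by two elements of odd order $p$, so $G^{\mathrm{ab}}$ is killed by $p$, whereas $\left\langle \SL_2(K), d_\pi\right\rangle$ has an order-$2$ quotient in which $d_\pi$ maps to an element of order prime to $p$. Also, in your part (a), checking $|G|>1$ does not rule out the proper quotient $A_5=\SL_2(\FF_5)/\{\pm I\}$; you need either $|G|=120$ (the paper verifies this computationally) or the observation that $A_5$ has many involutions and hence cannot embed in $\SL_2$ of odd characteristic, where $-I$ is the unique element of order $2$. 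On the positive side, your instinct to pin down $K=E$ in case (iii) addresses a point the paper glosses over; the clean way is via traces (which are conjugation-invariant, lie in $K$, and include $\tr(vu_z)=2-z$), not via the entries of the generators.
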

\begin{proof}
    Denote by $v$ and $u_z$ the matrices
    \[
    \left(
    \begin{array}{c c}
        0 & 1 \\
        -1 & 2
    \end{array}
    \right)
    \textrm{ \ \ \ and \ \ \ }
    \left(
    \begin{array}{c c}
        1 & z \\
        0 & 1
    \end{array}
    \right)
    \]
    respectively. Note that $\ord(v) = \ord(u_z) = p$. We shall prove that $v$ and $u_z$ belong to two distinct Sylow $p$-subgroups.

    \vspace\baselineskip

    \begin{claim*}
        Let $U = \{ u_a : a \in \FF_{q} \} \subseteq \SL_2(E)$ where $q = p^m$ and $u_a = \left( \begin{array}{c c} 1 & a \\ 0 & 1 \end{array} \right)$. Then $U \cap G$ is a $p$-Sylow of $G$. More generally, $U \cap G$ is the only $p$-Sylow of $G$ that contains $u_z$.
    \end{claim*}

    Let $P$ be a $p$-Sylow of $\SL_2(E)$ containing $u_z$. We will prove $P = U$. The claim would then follow by one of the Sylow theorems (namely the one which says that any $p$-subgroup is contained in a $p$-Sylow). In fact, by one of the Sylow theorems, $P = \alpha U \alpha^{-1}$ for some $\alpha = \left( \begin{array}{c c} a & b \\ c & d \end{array} \right) \in \SL_2(\FF_{q})$ (because $U$ is a $p$-Sylow of $\SL_2(\FF_{q})$). So, there exists $z' \in \FF_q \backslash \{ 0 \}$ such that $u_z = \alpha u_{z'} \alpha^{-1} = \left( \begin{array}{cc} 1- acz' & a^2 z \\ - c^2 z & 1 + acz \end{array} \right)$. Hence, $c = 0$. Thus, $P = U$.

    \vspace\baselineskip

    Hence, $v$ and $u_z$ belong to two distinct $p$-Sylows of $G$. Therefore, we can use \ref{thm:dickson}.

    Since we are assuming $p > 2$, there are only 3 possibilities for $G$: $\SL_2(\FF_{5})$ (this can only happen when $p=3$), $\SL_2(\FF_{p^r})$ or $\langle \SL_2(\FF_{p^r}) , d_{\pi} \rangle$, where $p^r$ is the order a $p$-Sylow of $G$.

    \vspace\baselineskip

    \begin{claim*}
        $G \not\cong \langle \SL_2(\FF_{p^r}) , d_{\pi} \rangle$.
    \end{claim*}

    Let $H = \langle \SL_2(\FF_{p^r}) , d_{\pi} \rangle$. If $G \cong H$, then their respective abelianizations are also isomorphic: $G^{\textrm{ab}} \cong H^{\textrm{ab}}$. Since $G = \langle v, u_z\rangle$ and $\ord(v) = \ord(u_z) = p$, every element of $G^{\textrm{ab}}$ has order $p$. We claim that $\overline{d_{\pi}}$ (the image of $d_{\pi}$ in $H^{\textrm{ab}}$) can't have order $p$.

    In fact, $\ord(d_{\pi}) = \ord(\pi)$. We know that $\ord(\pi^2) = p^r - 1$. On the other hand,
    \[
    \ord(\pi) = \left\{
                    \begin{array}{c l l}
                        \ord(\pi^2) & , & \textrm{ if } \ord(\pi) \textrm{ is odd} \\
                        2 \ord(\pi^2) & , & \textrm{ if } \ord(\pi) \textrm{ is even}
                    \end{array}
                \right.
    \]
    So, $\ord(\pi) = (p^r - 1)$ or $2 (p^r - 1)$.

    Since $\ord(\overline{d_{\pi}}) \mid \ord(d_{\pi})$ and $p \nmid 2 (p^r - 1)$, $\ord(\overline{d_{\pi}}) \neq p$.

    \vspace\baselineskip

    It remains to show that if $p = 3$, then $G \cong \SL_2(\FF_{5})$ if and only if $z^2 = 2$.

    \vspace\baselineskip

    \begin{claim*}
        If $p = 3$ and $G \cong \SL_2(\FF_{5})$, then $z^2 = 2$.
    \end{claim*}

    By the corollaries of Theorem 9.8, Chapter 1 in \cite{suzuki1982}, we have $Z(G) = \{ \pm I \}$ and, thus, $|Z(G)| = 2$. So, by the corollary of Theorem 9.9, Chapter 1 in \cite{suzuki1982}, $\frac{G}{Z(G)}$ is a simple group of order $60$. Therefore, by Exercise 9, Section 3, Chapter 3 in \cite{suzuki1982}, $\frac{G}{Z(G)} \cong A_5$.

    Let $\overline{v}$ and $\overline{u_z^{-1}}$ be the images of $v$ and $u_z^{-1}$ respectively in $A_5$. Since $v$ and $u_z^{-1}$ are clearly not in $Z(G)$ and their order is $3$, we get that $\ord(\overline{v}) = \ord(\overline{u_z^{-1}}) = 3$. So, $\overline{v} = (a b c)$ and $\overline{u_z^{-1}} = (d e f)$. Obviously we need to have $\{ a, b, c, d , e ,f \} = \{ 1, 2, 3, 4, 5 \}$. Without loss of generality, $\overline{v} = (1 2 3)$ and $\overline{u_z^{-1}} = (1 4 5)$. So, $\overline{v} \overline{u_z^{-1}} = (1 2 3 4 5)$. So, $(v u_z^{-1})^{5} = \pm I$. Thus, $\ord(v u_z^{-1}) = 5$ or $10$. Hence, looking at the Jordan canonical form of $v u_z^{-1}$, we get
    \[
    z+2 = \tr(v u_z^{-1}) = \pm (x + x^{-1})
    \]
    for some $x$ primitive fifth root of unity over $\FF_3$.

    Since $x^4 + x^3 + x^2 + x + 1 = 0$,
    \[
    (z+2)^2 = \mp (z+2) + 1 \textrm{ \ \ \ , i.e., \ \ \ } z^2 = z + 1 \textrm{ or } z^2 = 2
    \]

    If $z^2 = z + 1$, then $G = \langle v, u_z \rangle$ has 720 elements\footnotemark[1]. Hence, $z^2 = 2$.

    \vspace\baselineskip

    \begin{claim*}
        If $z^2 = 2$, then $G \cong \SL_2(\FF_{5})$.
    \end{claim*}

    We also have\footnotemark[1] that $|G| = 120$. So, $|G| = |\SL_2(\FF_{5})|$ (by fact \ref{fct:sl2_order}). Let $h = (v u_z)^2 = \left( \begin{array}{cc} 2 & 2+2z \\ z+1 & 2+2z \end{array} \right)$. Notice $h^5 = u^3 = (h u)^4 = 1$. Moreover, $\alpha = h^{-1} u h$. Hence, $G = \left< h, u \right>$. Then, since $G = |\SL_2(\FF_{5})|$, by facts \ref{fct:sl_presn} and \ref{fct:general_presn} we obtain $G \cong \SL_2(\FF_{5})$.
\end{proof}

\footnotetext[1]{Verified using the computer algebra system Sage
\cite{sage}.}

\subsection{A bit of algebraic number theory}

Our goal in this section is to state some basic facts from algebraic number theory and give an explicit formula for $f(\mathfrak{B}^+ | p)$ that will be used later. Unless otherwise mentioned, the facts and definitions in this section can be found in most algebraic number theory textbooks like \cite{marcus1977} and \cite{frohlich&taylor1993}.

Let us start fixing our notation:
\begin{itemize}
    \renewcommand{\labelitemi}{$\cdot$}
    \itemsep0em
    \item $e(- \mid -)$ := ramification index of one prime above another one,
    \item $f(- \mid -)$ := inertia degree of one prime above another one,
    \item $r(- \mid -)$ := number of distinct primes above a given one at the base field.
\end{itemize}

Moreover, in this section, we assume that
\begin{itemize}
    \renewcommand{\labelitemi}{$\cdot$}
    \itemsep0em
    \item $q$ is an odd number and $p$ is a prime number,
    \item $L_{q} := \QQ(\zeta_q)$ is the $q$-th cyclotomic field and $L_q^+ := \QQ(\zeta_q + \zeta_q^{-1})$,
    \item $\mathfrak{B}^+$ is a prime in $\mathcal{O}_{L_q^+}$ above $p$ and $\mathfrak{B}$ is a prime in $\mathcal{O}_{L_q}$ above $\mathfrak{B}^+$,
    \item $r$ is the number of primes in $\mathcal{O}_{L_q}$ above $p$ and $r^+$ is the number of primes in $\mathcal{O}_{L_q^+}$ above $p$,
    \item $D(\mathfrak{B}) = D(\mathfrak{B} | \QQ) = \{ \sigma \in \Gal(L_q | \QQ) : \sigma \mathfrak{B} = \mathfrak{B} \}$ is the decomposition group of $\mathfrak{B}$ over $\QQ$ and $K^D$ is the respective decomposition field (i.e., the subfield of $L_q$ that is fixed by $D(\mathfrak{B})$).
\end{itemize}

\begin{rmk}
    Notice $L_q | \QQ$ is a Galois extension and $L_q^+$ is the field fixed by $H := \{ 1, -1 \}$ (where $-1$ is the complex conjugation). In particular, $[L_q: L_q^+] = 2$.
\end{rmk}

\vspace\baselineskip

\begin{fact}
    \label{fct:number_of_primes_above}
    $r(L_q | \mathfrak{B}^+)$ = number of elements in the $H$-orbit of $\mathfrak{B}$.
\end{fact}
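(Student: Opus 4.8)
The plan is to deduce this directly from the basic fact that in a Galois extension the Galois group acts transitively on the set of primes lying above a fixed prime of the base field.

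First I would record that $L_q / L_q^+$ is itself a Galois extension: since $L_q/\QQ$ is Galois and $L_q^+$ is an intermediate field, $L_q/L_q^+$ is automatically Galois, and by the Remark above its Galois group is precisely $H = \{1, -1\}$, because $L_q^+$ is by definition the fixed field of $H$. Next I would recall why $H$ acts on the set $S$ of primes of $\mathcal{O}_{L_q}$ lying above $\mathfrak{B}^+$: any $\sigma \in H$ fixes $\mathcal{O}_{L_q^+}$ pointwise, hence fixes the ideal $\mathfrak{B}^+ = \mathfrak{B} \cap \mathcal{O}_{L_q^+}$, so $\sigma$ carries a prime of $\mathcal{O}_{L_q}$ contracting to $\mathfrak{B}^+$ to another such prime; this defines an action of $H$ on $S$.

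The key input is then that this action of $H = \Gal(L_q/L_q^+)$ on $S$ is transitive — a standard theorem in algebraic number theory (see \cite{marcus1977} or \cite{frohlich&taylor1993}), proved via the Chinese Remainder Theorem together with a prime-avoidance argument. Granting this, since $\mathfrak{B} \in S$ by hypothesis, transitivity forces $S = H \cdot \mathfrak{B}$, and hence
\[
    r(L_q \mid \mathfrak{B}^+) = |S| = |H \cdot \mathfrak{B}|,
\]
which is exactly the claimed equality.

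There is no real obstacle here: the statement is essentially a repackaging of transitivity of the Galois action on primes above a given one, and the only point needing care is to first justify that $L_q/L_q^+$ is Galois so that this transitivity applies; that has been arranged above. If one wanted an orbit-stabilizer reformulation one could additionally note $|H \cdot \mathfrak{B}| = |H| / |D(\mathfrak{B} \mid \mathfrak{B}^+)|$, but this is not needed for the statement as phrased.
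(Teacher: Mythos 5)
Your argument is correct and is exactly the standard one the paper implicitly appeals to by citing \cite{marcus1977} and \cite{frohlich&taylor1993}: since $L_q/L_q^+$ is Galois with group $H$, transitivity of the Galois action on the primes above $\mathfrak{B}^+$ identifies that set with the $H$-orbit of $\mathfrak{B}$. The paper offers no proof of its own for this Fact, so there is nothing further to compare.
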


\begin{fact}
    \label{fct:decomp_in_cyclotomic_fields}
    If $p \nmid q$, then
    \begin{enumerate}[(i)]
        \item $e(\mathfrak{B} | p) = 1$ (hence, $e(\mathfrak{B}^+ | p) = e(\mathfrak{B} | \mathfrak{B}^+) = 1$)
        \item $f(\mathfrak{B} | p) = f$, where $f$ is the order of $\overline{p}$ in $\left( \frac{\ZZ}{q \ZZ} \right) ^*$
        \item $r = \frac{\phi(q)}{f}$, where $\phi$ is Euler's phi function.
    \end{enumerate}
\end{fact}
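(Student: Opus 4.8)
The plan is to deduce all three assertions from the classical identification of $\Gal(L_q/\QQ)$ with $(\ZZ/q\ZZ)^{\times}$ together with the fundamental identity $\sum_i e_i f_i = [L_q:\QQ] = \phi(q)$ for the decomposition of $p$.

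First I would prove (i). Since $p \nmid q$, the polynomial $X^q - 1$ has derivative $qX^{q-1}$, which modulo $p$ is a nonzero scalar times $X^{q-1}$ and hence shares no root with $X^q-1$ in $\overline{\FF_p}$; so $X^q - 1$ is separable mod $p$. Equivalently, $\disc(\ZZ[\zeta_q])$ divides a power of $q$ and is therefore prime to $p$, so $p$ is unramified in $L_q = \QQ(\zeta_q)$, giving $e(\mathfrak{B}|p) = 1$. Since $e(\mathfrak{B}|p) = e(\mathfrak{B}|\mathfrak{B}^+)\,e(\mathfrak{B}^+|p)$ and both factors are positive integers, they must each equal $1$, proving the parenthetical claim as well.

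For (ii), recall the canonical isomorphism $\Gal(L_q/\QQ) \xrightarrow{\ \sim\ } (\ZZ/q\ZZ)^{\times}$ sending $\sigma$ to the residue of the integer $a$ with $\sigma(\zeta_q) = \zeta_q^a$. As $L_q/\QQ$ is abelian, $D(\mathfrak{B})$ is independent of the choice of $\mathfrak{B}\mid p$, and since $p$ is unramified, $D(\mathfrak{B})$ is cyclic, generated by the Frobenius $\Frob_p$ characterized by $\Frob_p(x) \equiv x^p \pmod{\mathfrak{B}}$ for all $x \in \cO_{L_q}$. Taking $x = \zeta_q$ gives $\Frob_p(\zeta_q) \equiv \zeta_q^p \pmod{\mathfrak{B}}$; because $p \nmid q$ the residue field has characteristic prime to $q$, so distinct $q$-th roots of unity stay distinct modulo $\mathfrak{B}$ and this congruence upgrades to the equality $\Frob_p(\zeta_q) = \zeta_q^p$, i.e.\ $\Frob_p$ corresponds to $\overline{p}$. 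Hence $f(\mathfrak{B}|p) = |D(\mathfrak{B})| = \ord(\Frob_p) = \ord(\overline{p}) = f$.

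Finally (iii) follows at once: $e=1$ and the $f_i$ are all equal to $f$ (since $\Gal(L_q/\QQ)$ permutes the primes above $p$ transitively), so the fundamental identity reads $rf = \phi(q)$, whence $r = \phi(q)/f$. The only step demanding real care is the identification of $\Frob_p$ with $\overline{p}$ in (ii) — in particular the passage from the defining congruence to an honest equality of roots of unity — while the rest is routine bookkeeping with standard theorems, so that is where I would be most careful.
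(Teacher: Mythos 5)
Your proof is correct and is precisely the standard textbook argument (separability of $X^q-1$ modulo $p$ for unramifiedness, identification of the Frobenius with $\overline{p}$ via injectivity of reduction on $q$-th roots of unity, and the fundamental identity $efr=\phi(q)$); the paper itself gives no proof of this Fact, simply citing \cite{marcus1977} and \cite{frohlich&taylor1993}, where this is the argument given. The one step you flag as delicate --- upgrading $\Frob_p(\zeta_q)\equiv\zeta_q^p\pmod{\mathfrak{B}}$ to an equality --- is handled correctly by your separability observation, so there is nothing to add.
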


\begin{fact}
    \label{fct:decomp_in_galois_extensions}
    Let $R$ be a Dedekind domain, $K$ its field of fractions, $L$ a finite Galois extension of $K$, $\mathcal{O}_L$ the ring of integers of $L$ and $G = \Gal(L | K)$. Let $\mathfrak{p}$ be a prime ideal in $R$ and $\mathfrak{B}_1, \dotsc, \mathfrak{B}_s$ the distinct prime ideals in $\mathcal{O}_L$ above $\mathfrak{p}$. Then
    \begin{enumerate}[(i)]
        \item $e(\mathfrak{B}_1 | \mathfrak{p}) = \dotsb = e(\mathfrak{B}_s | \mathfrak{p})$ and $f(\mathfrak{B}_1 | \mathfrak{p}) = \dotsb = f(\mathfrak{B}_s | \mathfrak{p})$
        \item $e(\mathfrak{B}_j | \mathfrak{p}) \cdot f(\mathfrak{B}_j | \mathfrak{p}) \cdot s = [L:K]$ (for every $j = 1, \dotsc, s$).
    \end{enumerate}
\end{fact}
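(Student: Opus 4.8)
The plan is to deduce both parts from two ingredients: the transitivity of the action of $G$ on the set $\{\mathfrak{B}_1,\dotsc,\mathfrak{B}_s\}$, and the fundamental identity $\sum_{i} e(\mathfrak{B}_i\mid\mathfrak{p})\,f(\mathfrak{B}_i\mid\mathfrak{p})=[L:K]$. For transitivity, first note that each $\sigma\in G$ restricts to a ring automorphism of $\mathcal{O}_L$ fixing $R$ pointwise, so it permutes the primes above $\mathfrak{p}$, since $\sigma\mathfrak{B}\cap R=\sigma(\mathfrak{B}\cap R)=\mathfrak{p}$. If some $\mathfrak{B}'$ above $\mathfrak{p}$ were not equal to any $\sigma\mathfrak{B}_1$, then the maximal ideals $\{\sigma\mathfrak{B}_1\}_{\sigma\in G}$ and $\mathfrak{B}'$ would be pairwise distinct, so by the Chinese Remainder Theorem (or prime avoidance) there is $x\in\mathfrak{B}'$ with $x\notin\sigma\mathfrak{B}_1$ for all $\sigma$. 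The norm $N_{L/K}(x)=\prod_{\sigma\in G}\sigma(x)$ is $G$-invariant and integral over $R$, hence lies in $R$; it lies in $\mathfrak{B}'$ (the factor $\sigma=\mathrm{id}$ does), hence in $\mathfrak{B}'\cap R=\mathfrak{p}\subseteq\mathfrak{B}_1$; since $\mathfrak{B}_1$ is prime some factor $\sigma(x)$ lies in $\mathfrak{B}_1$, i.e.\ $x\in\sigma^{-1}\mathfrak{B}_1$, a contradiction. So $G$ is transitive on $\{\mathfrak{B}_1,\dotsc,\mathfrak{B}_s\}$.

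Part (i) is then immediate: given $i,j$, choose $\sigma$ with $\sigma\mathfrak{B}_i=\mathfrak{B}_j$; the induced isomorphism $(\mathcal{O}_L)_{\mathfrak{B}_i}\to(\mathcal{O}_L)_{\mathfrak{B}_j}$ over $R_{\mathfrak{p}}$ sends the maximal ideal to the maximal ideal and $\mathfrak{p}$ to $\mathfrak{p}$, so it preserves the exponent to which $\mathfrak{p}$ occurs — giving $e(\mathfrak{B}_i\mid\mathfrak{p})=e(\mathfrak{B}_j\mid\mathfrak{p})$ — and it induces an isomorphism of residue fields over $R/\mathfrak{p}$ — giving $f(\mathfrak{B}_i\mid\mathfrak{p})=f(\mathfrak{B}_j\mid\mathfrak{p})$. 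Denote the common values by $e$ and $f$. For the fundamental identity I would localize at $\mathfrak{p}$: with $S=R\setminus\mathfrak{p}$, replace $R$ by the principal ideal domain $R_{\mathfrak{p}}$ and $\mathcal{O}_L$ by $S^{-1}\mathcal{O}_L$; this changes none of the $e_i,f_i$ and makes the $\mathfrak{B}_i$ the full list of maximal ideals above $\mathfrak{p}R_{\mathfrak{p}}$. Since $L/K$ is separable (being Galois), $\mathcal{O}_L$ is a finitely generated $R$-module, so $S^{-1}\mathcal{O}_L$ is finitely generated torsion-free over $R_{\mathfrak{p}}$, hence free, and tensoring with $K$ shows its rank is $n:=[L:K]$; therefore $\dim_{R/\mathfrak{p}}\bigl(\mathcal{O}_L/\mathfrak{p}\mathcal{O}_L\bigr)=n$. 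On the other hand $\mathfrak{p}\mathcal{O}_L=\prod_i\mathfrak{B}_i^{e_i}$, so the Chinese Remainder Theorem gives $\mathcal{O}_L/\mathfrak{p}\mathcal{O}_L\cong\prod_i\mathcal{O}_L/\mathfrak{B}_i^{e_i}$, and each factor has a filtration with graded pieces $\mathfrak{B}_i^{k}/\mathfrak{B}_i^{k+1}$ for $0\le k<e_i$, each of which is one-dimensional over $\mathcal{O}_L/\mathfrak{B}_i$ (invertibility of $\mathfrak{B}_i$) and hence $f_i$-dimensional over $R/\mathfrak{p}$. Comparing dimensions gives $n=\sum_i e_i f_i$, and with part (i) this reads $n=s\cdot ef$, which is (ii).

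The only point requiring care is the module-finiteness of $\mathcal{O}_L$ over $R$ that underlies the freeness (and hence the rank count) of $S^{-1}\mathcal{O}_L$ over $R_{\mathfrak{p}}$; this is automatic here because a Galois extension is separable, and in every case used in this paper ($R=\ZZ$, or $R=\mathcal{O}_{L_q^+}$) it is classical. Granting that, everything reduces to the transitivity of $G$ on the primes above $\mathfrak{p}$, which is the conceptual heart of the argument.
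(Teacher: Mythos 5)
Your proof is correct: the paper states this as a Fact without proof, citing standard references (Marcus; Fr\"ohlich--Taylor), and your argument --- transitivity of the Galois action on the primes above $\mathfrak{p}$ via the norm/Chinese-Remainder trick, followed by the fundamental identity $\sum_i e_i f_i = [L:K]$ obtained by localizing at $\mathfrak{p}$ and counting $\dim_{R/\mathfrak{p}}\bigl(\mathcal{O}_L/\mathfrak{p}\mathcal{O}_L\bigr)$ in two ways --- is exactly the standard textbook proof those references supply. Your caveat about module-finiteness of $\mathcal{O}_L$ over $R$ (which holds here because a Galois extension is separable, and which is what makes the rank count legitimate) is the right hypothesis to flag.
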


\begin{fact}
    \label{fct:decomp_field}
    $[\Gal(L_q | \QQ) : D(\mathfrak{B})] = r$
\end{fact}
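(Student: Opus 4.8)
The plan is to realize $r$ as the size of a single $G$-orbit, where $G = \Gal(L_q|\QQ)$, and then apply the orbit--stabilizer theorem. First I would recall that $G$ acts on the set $S = \{\mathfrak{B}_1, \dotsc, \mathfrak{B}_s\}$ of prime ideals of $\mathcal{O}_{L_q}$ lying above $p$: for $\sigma \in G$ and a prime $\mathfrak{B}_j$ above $p$, the ideal $\sigma(\mathfrak{B}_j)$ is again a prime of $\mathcal{O}_{L_q}$ (since $\sigma$ is a ring automorphism of $\mathcal{O}_{L_q}$) and it still contains $p\ZZ$, hence still lies above $p$. So the action is well defined, and by definition $|S| = r$.

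Next I would establish the one substantive input, namely that this action is \emph{transitive}. This is the standard fact that in a Galois extension the Galois group permutes the primes over a fixed base prime transitively; a quick argument: given two primes $\mathfrak{B}, \mathfrak{B}'$ above $p$, if no $\sigma \in G$ sent $\mathfrak{B}$ to $\mathfrak{B}'$, then by the Chinese Remainder Theorem one could find $x \in \mathcal{O}_{L_q}$ with $x \in \mathfrak{B}'$ but $x \notin \sigma(\mathfrak{B})$ for all $\sigma$, whence the norm $N_{L_q/\QQ}(x) = \prod_{\sigma} \sigma(x)$ lies in $\mathfrak{B}' \cap \ZZ = p\ZZ \subseteq \mathfrak{B}$, forcing some $\sigma(x) \in \mathfrak{B}$, i.e. $x \in \sigma^{-1}(\mathfrak{B})$, a contradiction. (Alternatively this may simply be cited from \cite{marcus1977} or \cite{frohlich&taylor1993}.) Consequently the orbit of $\mathfrak{B}$ is all of $S$, of size $r$.

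Finally I would identify the stabilizer: by the very definition given in the text, $\Stab_G(\mathfrak{B}) = \{\sigma \in G : \sigma\mathfrak{B} = \mathfrak{B}\} = D(\mathfrak{B})$. The orbit--stabilizer theorem then yields $[G : D(\mathfrak{B})] = |\text{orbit of } \mathfrak{B}| = r$, which is exactly the assertion of Fact \ref{fct:decomp_field}. I do not expect any real obstacle here: the only non-formal ingredient is the transitivity of the Galois action on primes above $p$, and that is a classical result; the rest is a direct application of orbit--stabilizer together with the definitions already recorded in the excerpt.
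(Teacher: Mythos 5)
Your proof is correct: the paper states this as a standard fact cited to \cite{marcus1977} and \cite{frohlich&taylor1993} without giving its own argument, and what you write is precisely the classical textbook proof those references contain --- transitivity of the $\Gal(L_q|\QQ)$-action on the primes above $p$ (via the norm/Chinese Remainder Theorem trick), the observation that $D(\mathfrak{B})$ is by definition the stabilizer of $\mathfrak{B}$, and the orbit--stabilizer theorem. No gaps.
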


\begin{fact}
    \label{fct:image_of_decomp_group}
    The map $\Gal(L_q | \QQ) \rightarrow \Gal \left( \frac{\mathcal{O}_{L_q}}{\mathfrak{B}} \big| \frac{\ZZ}{p} \right)$ defined by $\sigma \mapsto \overline{\sigma}$ is surjective.
\end{fact}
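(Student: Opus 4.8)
The plan is to recognize the displayed arrow as the classical reduction map from a decomposition group onto the Galois group of a residue field extension, and to deduce surjectivity from that. First I would note that, strictly speaking, $\sigma \mapsto \overline{\sigma}$ produces an element of $\Gal\!\left(\tfrac{\mathcal{O}_{L_q}}{\mathfrak B}\,\big|\,\tfrac{\ZZ}{p}\right)$ only when $\sigma$ lies in the decomposition group $D(\mathfrak B)$: for such $\sigma$ one has $\sigma(\mathcal{O}_{L_q}) = \mathcal{O}_{L_q}$ and $\sigma(\mathfrak B) = \mathfrak B$, so $\sigma$ descends to a $\ZZ/p$-automorphism $\overline{\sigma}$ of the residue field $\kappa := \mathcal{O}_{L_q}/\mathfrak B$, and the resulting homomorphism $D(\mathfrak B) \to \Gal(\kappa \mid \ZZ/p)$ has the inertia group as its kernel by definition. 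Since $D(\mathfrak B) \subseteq \Gal(L_q \mid \QQ)$, it is enough to prove this homomorphism is surjective; this is a standard theorem (cf.\ \cite{marcus1977} or \cite{frohlich&taylor1993}), which I would reprove along the following lines.

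Since $\kappa$ and $\ZZ/p$ are finite fields, the extension $\kappa \mid \ZZ/p$ is separable, so $\kappa = (\ZZ/p)(\overline{\theta})$ for some nonzero $\overline{\theta}$. I would lift $\overline{\theta}$, using the Chinese Remainder Theorem, to an element $\theta \in \mathcal{O}_{L_q}$ with $\theta \equiv \overline{\theta} \pmod{\mathfrak B}$ and $\theta \equiv 0 \pmod{\mathfrak B'}$ for every prime $\mathfrak B' \neq \mathfrak B$ of $\mathcal{O}_{L_q}$ above $p$, and then form $g(X) = \prod_{\sigma \in \Gal(L_q \mid \QQ)} (X - \sigma\theta)$, whose coefficients are Galois-invariant algebraic integers and hence lie in $\ZZ$. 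Reducing mod $p$ gives $\overline{g}(X) = \prod_{\sigma}(X - \overline{\sigma\theta})$ in $(\ZZ/p)[X]$. The key observation is that if $\sigma \notin D(\mathfrak B)$ then $\sigma^{-1}\mathfrak B$ is a prime over $p$ distinct from $\mathfrak B$, so $\theta \in \sigma^{-1}\mathfrak B$, i.e.\ $\overline{\sigma\theta} = 0$ in $\kappa$. Consequently every nonzero root of $\overline{g}$ in $\kappa$ is of the form $\overline{\sigma\theta} = \overline{\sigma}(\overline{\theta})$ with $\sigma \in D(\mathfrak B)$; in particular every $\Gal(\kappa \mid \ZZ/p)$-conjugate of $\overline{\theta}$ is of this form, because such conjugates are roots of the minimal polynomial of $\overline{\theta}$ over $\ZZ/p$, which divides $\overline{g}$, and are nonzero. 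Since $\overline{\theta}$ generates $\kappa$ over $\ZZ/p$, an automorphism in $\Gal(\kappa \mid \ZZ/p)$ is determined by its value on $\overline{\theta}$, so every such automorphism is some $\overline{\sigma}$ with $\sigma \in D(\mathfrak B)$ — this is the desired surjectivity.

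In the situation relevant here one has $p \nmid q$, so by Fact~\ref{fct:decomp_in_cyclotomic_fields}(i) the ramification index $e(\mathfrak B \mid p)$ is $1$; hence the inertia group is trivial and, by Fact~\ref{fct:decomp_in_galois_extensions}, $|D(\mathfrak B)| = f(\mathfrak B \mid p) = [\kappa : \ZZ/p] = |\Gal(\kappa \mid \ZZ/p)|$, so the reduction map is in fact an isomorphism — it sends the Frobenius generator of $D(\mathfrak B)$, which corresponds to $\overline{p} \in (\ZZ/q)^{\times}$, to the $p$-power map generating $\Gal(\kappa \mid \ZZ/p)$. I expect the only genuinely delicate step to be the Chinese Remainder Theorem choice of $\theta$ in the middle paragraph: it is exactly what forces the conjugates $\sigma\theta$ with $\sigma \notin D(\mathfrak B)$ to vanish modulo $\mathfrak B$, which in turn pins the minimal polynomial of $\overline{\theta}$ down to roots arising from $D(\mathfrak B)$; the rest is routine bookkeeping with the norm-type polynomial $g$ and with residue fields.
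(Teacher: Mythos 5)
Your proof is correct. The paper does not actually prove this statement — it is listed as a ``Fact'' whose proof is deferred to the standard references (\cite{marcus1977}, \cite{frohlich&taylor1993}) — and what you have written is precisely the classical argument found there (lift a generator of the residue field by CRT so that its conjugates under $\sigma \notin D(\mathfrak{B})$ vanish mod $\mathfrak{B}$, then trap the minimal polynomial of $\overline{\theta}$ inside $\prod_{\sigma}(X-\overline{\sigma\theta})$). You are also right to flag that the map as literally stated only makes sense on the decomposition group $D(\mathfrak{B})$, since a general $\sigma \in \Gal(L_q \mid \QQ)$ need not preserve $\mathfrak{B}$; restricting to $D(\mathfrak{B})$ is exactly how the paper uses the fact in Proposition~\ref{prop:group_of_decomp}, and your closing remark that the map is an isomorphism when $p \nmid q$ matches that application.
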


\begin{fact}
    \label{fct:group_of_units_power}
    If $l$ is a prime and $\alpha$ is a positive integer, then $\left( \frac{\ZZ}{l^{\alpha} \ZZ} \right)^*$ is a cyclic group of order $\phi(l^{\alpha}) = l^{\alpha - 1} (l - 1)$.
\end{fact}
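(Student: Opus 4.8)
The plan is to split $(\ZZ/l^{\alpha}\ZZ)^{*}$ into two cyclic subgroups of coprime order and to combine them. Since $q$ is odd throughout this section, only odd primes $l$ occur in the applications, so I will treat $l$ odd; the case $\alpha=1$ reduces below to the cyclicity of $\FF_{l}^{*}$ (valid for every prime, including $l=2$), while the genuinely non-cyclic cases $l=2$, $\alpha\ge 3$ do not arise here.

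First I would invoke the standard fact that $(\ZZ/l\ZZ)^{*}=\FF_{l}^{*}$ is cyclic: in any group of order $n$ the number of elements of a given order $d\mid n$ is $0$ or $\phi(d)$ (because $x^{d}=1$ has at most $d$ roots in a field), and these counts must sum to $n=\sum_{d\mid n}\phi(d)$, forcing an element of order $n$. Fix a generator $\bar g$ of $\FF_{l}^{*}$ and lift it to some $g\in(\ZZ/l^{\alpha}\ZZ)^{*}$. Reduction modulo $l$ is a surjection $(\ZZ/l^{\alpha}\ZZ)^{*}\to(\ZZ/l\ZZ)^{*}$, so $(l-1)\mid\ord(g)$; together with $\ord(g)\mid\phi(l^{\alpha})=l^{\alpha-1}(l-1)$ and $\gcd(l-1,l)=1$ this forces $\ord(g)=l^{j}(l-1)$ for some $0\le j\le\alpha-1$, hence $\ord\!\bigl(g^{l^{\alpha-1}}\bigr)=l-1$. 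Replacing $g$ by $g^{l^{\alpha-1}}$, I may assume $\ord(g)=l-1$.

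Next I would pin down the order of the principal unit $1+l$ via the congruence
\[
(1+l)^{l^{k}}\equiv 1+l^{k+1}\pmod{l^{k+2}}\qquad(k\ge 0),
\]
proved by induction on $k$ from the binomial expansion of $\bigl(1+l^{k+1}(1+\cdots)\bigr)^{l}$: its linear term contributes $l^{k+2}$ while every higher term is divisible by $l^{k+3}$, using that $l$ is odd (so $\binom{l}{2}=\tfrac{l(l-1)}{2}$ carries an extra factor of $l$) and that $l\mid\binom{l}{j}$ for $1\le j\le l-1$. Taking $k=\alpha-2$ gives $(1+l)^{l^{\alpha-2}}\equiv 1+l^{\alpha-1}\not\equiv 1\pmod{l^{\alpha}}$, and $k=\alpha-1$ gives $(1+l)^{l^{\alpha-1}}\equiv 1\pmod{l^{\alpha}}$; since $1+l$ lies in the group of principal units $\{x\in(\ZZ/l^{\alpha}\ZZ)^{*}:x\equiv 1\pmod l\}$, which has order $l^{\alpha-1}$, its order divides $l^{\alpha-1}$ but not $l^{\alpha-2}$, hence equals $l^{\alpha-1}$.

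Finally, since $\gcd(l-1,l^{\alpha-1})=1$ and $(\ZZ/l^{\alpha}\ZZ)^{*}$ is abelian, the element $g(1+l)$ has order $\lcm(l-1,l^{\alpha-1})=(l-1)l^{\alpha-1}=\phi(l^{\alpha})=|(\ZZ/l^{\alpha}\ZZ)^{*}|$, so $(\ZZ/l^{\alpha}\ZZ)^{*}$ is cyclic. I expect the only step requiring genuine care to be the principal-unit congruence that fixes $\ord(1+l)$; that is precisely where oddness of $l$ is used, and everything surrounding it is formal group theory.
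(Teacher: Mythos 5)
Your proof is correct and is the standard textbook argument (the paper states this as a Fact and defers to references such as \cite{marcus1977}, so there is no proof here to compare against): cyclicity of $\FF_l^{*}$ by the root-counting argument, a lifted generator of order $l-1$, the principal unit $1+l$ of order $l^{\alpha-1}$ via the binomial congruence, and the coprime-order combination. You were also right to flag that the statement as literally printed fails for $l=2$, $\alpha\ge 3$ (e.g.\ $(\ZZ/8\ZZ)^{*}$ is not cyclic); this is a harmless imprecision in the paper, since the Fact is only invoked for odd prime powers $q=l^{\alpha}$, exactly the case your argument covers.
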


\begin{fact} (\emph{Theorem 2.13 in \cite{washington1982}})
    \label{fct:cyclotomic_inertia_degree}
    If $(p,q)=1$, then $f(\mathfrak{B} | p)$ is the smallest positive integer $f$ such that $p^f \equiv 1 \pmod{q}$.
\end{fact}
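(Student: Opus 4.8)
The plan is to compute $f(\mathfrak{B}\mid p)$ by identifying the decomposition group $D(\mathfrak{B})$ explicitly as a subgroup of $\Gal(L_q\mid\QQ)\cong(\ZZ/q\ZZ)^\times$ and reading off its order. Throughout I use that $\mathcal{O}_{L_q}=\ZZ[\zeta_q]$ and the standard isomorphism $\Gal(L_q\mid\QQ)\cong(\ZZ/q\ZZ)^\times$ sending $\sigma_a$ to $a$, where $\sigma_a(\zeta_q)=\zeta_q^{a}$.

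First I would note that, since $(p,q)=1$, Fact \ref{fct:decomp_in_cyclotomic_fields}(i) gives $e(\mathfrak{B}\mid p)=1$, so $p$ is unramified in $L_q$; hence the reduction map $D(\mathfrak{B})\to\Gal(\mathcal{O}_{L_q}/\mathfrak{B}\mid\FF_p)$ is an isomorphism (it is surjective in general, cf. Fact \ref{fct:image_of_decomp_group}, and a count via Facts \ref{fct:decomp_field} and \ref{fct:decomp_in_galois_extensions}(ii) gives $|D(\mathfrak{B})|=\phi(q)/r=e\cdot f(\mathfrak{B}\mid p)=f(\mathfrak{B}\mid p)=[\mathcal{O}_{L_q}/\mathfrak{B}:\FF_p]$, so it is also injective). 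Since $\mathcal{O}_{L_q}/\mathfrak{B}$ is a finite field of characteristic $p$, its Galois group over $\FF_p$ is cyclic, generated by the Frobenius $x\mapsto x^{p}$; thus $D(\mathfrak{B})$ is cyclic of order $f(\mathfrak{B}\mid p)$, generated by the unique $\sigma\in D(\mathfrak{B})$ with $\sigma(x)\equiv x^{p}\pmod{\mathfrak{B}}$ for all $x\in\mathcal{O}_{L_q}$.

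Next I would pin down which $\sigma_a$ this Frobenius generator is. Writing $\sigma=\sigma_a$ and applying the congruence to the ring generator $\zeta_q$ gives $\zeta_q^{a}\equiv\zeta_q^{p}\pmod{\mathfrak{B}}$. The key observation is that the $q$-th roots of unity $1,\zeta_q,\dots,\zeta_q^{q-1}$ are pairwise incongruent modulo $\mathfrak{B}$: indeed $X^{q}-1$ is separable modulo $p$ because $(p,q)=1$, so two distinct $q$-th roots of unity cannot reduce to the same element of $\mathcal{O}_{L_q}/\mathfrak{B}$. Therefore $\zeta_q^{a}\equiv\zeta_q^{p}\pmod{\mathfrak{B}}$ forces $a\equiv p\pmod q$, that is, $\sigma=\sigma_p$. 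Consequently $f(\mathfrak{B}\mid p)=|D(\mathfrak{B})|=\ord(\sigma_p)$, which under $\Gal(L_q\mid\QQ)\cong(\ZZ/q\ZZ)^\times$ is precisely the multiplicative order of $p$ modulo $q$, i.e.\ the least positive integer $f$ with $p^{f}\equiv1\pmod q$.

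The only substantive point is the identification of the Frobenius element with $\sigma_p$, which rests entirely on the separability of $X^{q}-1$ modulo $p$; everything else is bookkeeping with the decomposition and inertia groups. If one prefers, the decomposition group can be avoided altogether by invoking the Dedekind--Kummer theorem (valid here since $\mathcal{O}_{L_q}=\ZZ[\zeta_q]$): the primes of $\mathcal{O}_{L_q}$ above $p$ biject with the monic irreducible factors of the cyclotomic polynomial $\Phi_q$ modulo $p$, the residue degree of each prime being the degree of the corresponding factor, and one checks directly that every irreducible factor of $\Phi_q$ over $\FF_p$ has degree $f=\ord(p\bmod q)$, because a primitive $q$-th root of unity in $\overline{\FF_p}$ lies in $\FF_{p^{n}}$ exactly when $q\mid p^{n}-1$, the smallest such $n$ being $\ord(p\bmod q)$.
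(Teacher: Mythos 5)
The paper does not prove this statement at all: it is quoted verbatim as Theorem 2.13 of Washington's book, so there is no in-paper argument to compare against. Your proof is correct and complete, and it is essentially the standard textbook argument (indeed the one Washington gives): since $(p,q)=1$ the prime $p$ is unramified, so $D(\mathfrak{B})$ maps isomorphically onto $\Gal\bigl(\mathcal{O}_{L_q}/\mathfrak{B}\mid\FF_p\bigr)$ and is generated by a Frobenius element; the separability of $X^q-1$ modulo $p$ forces the $q$-th roots of unity to remain distinct modulo $\mathfrak{B}$, which pins the Frobenius down as $\sigma_p$ and identifies $f(\mathfrak{B}\mid p)$ with the multiplicative order of $p$ modulo $q$. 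Your alternative route via Dedekind--Kummer and the factorization of $\Phi_q$ over $\FF_p$ is also valid and has the small advantage of bypassing decomposition groups entirely, at the cost of invoking $\mathcal{O}_{L_q}=\ZZ[\zeta_q]$. One cosmetic remark: your counting step already uses Fact \ref{fct:decomp_in_cyclotomic_fields}(ii), whose proof in the source is this very statement, so to avoid circularity it is cleaner to get $|D(\mathfrak{B})|=e\cdot f$ directly from Facts \ref{fct:decomp_field} and \ref{fct:decomp_in_galois_extensions}(ii) without appealing to the explicit value of $f$ --- which is in fact what you do, so no change is needed beyond not citing \ref{fct:decomp_in_cyclotomic_fields}(ii) there.
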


\vspace\baselineskip

\begin{prop}
    \label{prop:f+_and_f}
    Suppose $(p,q) = 1$. Then,
    \[
        f(\mathfrak{B}^+ | p) = \left\{
                                    \begin{array}{lcl}
                                        f(\mathfrak{B} | p) & , & \textrm{ if } -1 \not\in D(\mathfrak{B}) \\
                                        \frac{f(\mathfrak{B} | p)}{2} & , & \textrm{ if } -1 \in D(\mathfrak{B})
                                    \end{array}
                                \right.
    \]
\end{prop}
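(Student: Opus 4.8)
The plan is to pass through the intermediate extension $L_q/L_q^+$ and exploit the multiplicativity of inertia degrees in towers. Since $(p,q)=1$, Fact \ref{fct:decomp_in_cyclotomic_fields}(i) gives $e(\mathfrak{B}\mid p)=1$, hence also $e(\mathfrak{B}\mid\mathfrak{B}^+)=e(\mathfrak{B}^+\mid p)=1$; in particular, looking at the chain of residue field extensions over $\ZZ/p$ one gets $f(\mathfrak{B}\mid p)=f(\mathfrak{B}\mid\mathfrak{B}^+)\cdot f(\mathfrak{B}^+\mid p)$. So the proposition is equivalent to the claim that $f(\mathfrak{B}\mid\mathfrak{B}^+)=1$ when $-1\notin D(\mathfrak{B})$ and $f(\mathfrak{B}\mid\mathfrak{B}^+)=2$ when $-1\in D(\mathfrak{B})$.

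To compute $f(\mathfrak{B}\mid\mathfrak{B}^+)$, I would apply Fact \ref{fct:decomp_in_galois_extensions} to the Galois extension $L_q/L_q^+$, whose group is $H=\{1,-1\}$ of order $2$. Part (ii) of that fact together with $e(\mathfrak{B}\mid\mathfrak{B}^+)=1$ gives
\[
  f(\mathfrak{B}\mid\mathfrak{B}^+)\cdot r(L_q\mid\mathfrak{B}^+)=[L_q:L_q^+]=2 ,
\]
so it suffices to determine $r(L_q\mid\mathfrak{B}^+)$, the number of primes of $\mathcal{O}_{L_q}$ above $\mathfrak{B}^+$: this number is $2$ exactly when $f(\mathfrak{B}\mid\mathfrak{B}^+)=1$, and $1$ exactly when $f(\mathfrak{B}\mid\mathfrak{B}^+)=2$.

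Now I would invoke Fact \ref{fct:number_of_primes_above}: $r(L_q\mid\mathfrak{B}^+)$ is the size of the $H$-orbit of $\mathfrak{B}$. Since $\Gal(L_q\mid\QQ)$ acts on the primes above $p$ with $\Stab(\mathfrak{B})=D(\mathfrak{B})$, by the very definition of the decomposition group, the stabilizer of $\mathfrak{B}$ inside $H$ is $H\cap D(\mathfrak{B})$, and the orbit–stabilizer theorem gives $|H\cdot\mathfrak{B}|=|H|/|H\cap D(\mathfrak{B})|=2/|H\cap D(\mathfrak{B})|$. If $-1\notin D(\mathfrak{B})$ then $H\cap D(\mathfrak{B})=\{1\}$, so $r(L_q\mid\mathfrak{B}^+)=2$, hence $f(\mathfrak{B}\mid\mathfrak{B}^+)=1$ and $f(\mathfrak{B}^+\mid p)=f(\mathfrak{B}\mid p)$. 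If $-1\in D(\mathfrak{B})$ then $H\cap D(\mathfrak{B})=H$, so $r(L_q\mid\mathfrak{B}^+)=1$, hence $f(\mathfrak{B}\mid\mathfrak{B}^+)=2$ and $f(\mathfrak{B}^+\mid p)=f(\mathfrak{B}\mid p)/2$. This is exactly the asserted formula.

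There is no serious obstacle; the only points requiring a little care are the multiplicativity $f(\mathfrak{B}\mid p)=f(\mathfrak{B}\mid\mathfrak{B}^+)\,f(\mathfrak{B}^+\mid p)$ in the tower $\QQ\subseteq L_q^+\subseteq L_q$ (immediate from the corresponding statement for residue field degrees) and the identification of the $H$-stabilizer of $\mathfrak{B}$ with $H\cap D(\mathfrak{B})$, which is just the definition of $D(\mathfrak{B})$ as a stabilizer, restricted to the subgroup $H$. An alternative route, avoiding the intermediate field, is to use the standard fact that $D(\mathfrak{B}^+\mid p)$ is the image of $D(\mathfrak{B})$ under the restriction map $\Gal(L_q\mid\QQ)\to\Gal(L_q^+\mid\QQ)$, together with the observation that, $p$ being unramified in both fields, $f(\mathfrak{B}\mid p)=|D(\mathfrak{B})|$ and $f(\mathfrak{B}^+\mid p)=|D(\mathfrak{B}^+\mid p)|$; the index $[D(\mathfrak{B}):D(\mathfrak{B})\cap H]$ is then $1$ or $2$ according to whether $-1\notin D(\mathfrak{B})$ or $-1\in D(\mathfrak{B})$, giving the same conclusion.
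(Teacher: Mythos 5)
Your proof is correct and follows essentially the same route as the paper: multiplicativity of $f$ in the tower $\QQ\subseteq L_q^+\subseteq L_q$, the relation $f(\mathfrak{B}\mid\mathfrak{B}^+)\cdot r(L_q\mid\mathfrak{B}^+)=2$ from Facts \ref{fct:decomp_in_galois_extensions} and \ref{fct:decomp_in_cyclotomic_fields}, and then Fact \ref{fct:number_of_primes_above} to read off $r(L_q\mid\mathfrak{B}^+)$ from whether $-1\in D(\mathfrak{B})$. You merely make explicit, via orbit--stabilizer, the step the paper dismisses as ``follows easily.''
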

\begin{proof}
    Since the inertia degree is multiplicative, we get that $f(\mathfrak{B}^+ | p) = f(\mathfrak{B} | p)$ if and only if f($\mathfrak{B} | \mathfrak{B}^+) = 1$. By fact \ref{fct:decomp_in_galois_extensions} and \ref{fct:decomp_in_cyclotomic_fields}, we have $f(\mathfrak{B} | \mathfrak{B}^+) \cdot r(L_q | \mathfrak{B}^+) = 2$. So, it is enough to show that $r(L_q | \mathfrak{B}^+) = 1$ if and only if $-1 \in D(\mathfrak{B})$. But this follows easily from fact \ref{fct:number_of_primes_above}.
\end{proof}

\begin{prop}
    \label{prop:group_of_decomp}
    $D(\mathfrak{B}) \cong \frac{\ZZ}{f \ZZ}$
\end{prop}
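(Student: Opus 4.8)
The strategy is to first pin down $|D(\mathfrak{B})|$ and then identify $D(\mathfrak{B})$ with the Galois group of the residue extension at $\mathfrak{B}$, which is visibly cyclic. Throughout, $f = f(\mathfrak{B}\mid p)$, and I use $p\nmid q$ so that Fact~\ref{fct:decomp_in_cyclotomic_fields} applies (this is the standing hypothesis of the preceding proposition).

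\emph{Step 1 (order).} Since $L_q = \QQ(\zeta_q)$, we have $|\Gal(L_q\mid\QQ)| = [L_q:\QQ] = \phi(q)$. By Fact~\ref{fct:decomp_field}, $[\Gal(L_q\mid\QQ):D(\mathfrak{B})] = r$, so $|D(\mathfrak{B})| = \phi(q)/r$; and by Fact~\ref{fct:decomp_in_cyclotomic_fields}(iii), $r = \phi(q)/f$. Hence $|D(\mathfrak{B})| = f$.

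\emph{Step 2 (the residue extension is cyclic).} By Fact~\ref{fct:decomp_in_cyclotomic_fields}(ii) the residue field $k := \mathcal{O}_{L_q}/\mathfrak{B}$ satisfies $[k:\FF_p] = f(\mathfrak{B}\mid p) = f$, so $k \cong \FF_{p^f}$ and $\Gal(k\mid\FF_p)$ is cyclic of order $f$, generated by the Frobenius $x\mapsto x^p$. By Fact~\ref{fct:image_of_decomp_group}, the reduction map $D(\mathfrak{B}) \to \Gal(k\mid\FF_p)$, $\sigma\mapsto\overline\sigma$, is surjective. (Alternatively, since $e(\mathfrak{B}\mid p)=1$ by Fact~\ref{fct:decomp_in_cyclotomic_fields}(i), the inertia subgroup of $D(\mathfrak{B})$ is trivial, so this map is injective.) A surjection (resp.\ injection) of finite groups of the same cardinality $f$ is an isomorphism; therefore $D(\mathfrak{B}) \cong \Gal(k\mid\FF_p) \cong \ZZ/f\ZZ$.

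There is no serious obstacle here: the proof is essentially the assembly of Facts~\ref{fct:decomp_field}, \ref{fct:decomp_in_cyclotomic_fields}, and \ref{fct:image_of_decomp_group}. The only point deserving a moment's attention is the comparison in Step~2 — that the reduction map, a priori only surjective, must be an isomorphism once source and target are both known to have order $f$. This is precisely where the hypothesis $p\nmid q$ (built into Fact~\ref{fct:decomp_in_cyclotomic_fields}) is genuinely needed: without it the inertia subgroup would be nontrivial, the map would have a kernel, and $D(\mathfrak{B})$ would in general be neither of order $f$ nor cyclic.
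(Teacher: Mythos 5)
Your proof is correct and follows the same route as the paper's: compute $|D(\mathfrak{B})| = f$ from Facts~\ref{fct:decomp_field} and \ref{fct:decomp_in_cyclotomic_fields}, identify the residue field as $\FF_{p^f}$, and use the surjection of Fact~\ref{fct:image_of_decomp_group} onto its cyclic Galois group. Your Step~2 is in fact slightly more careful than the paper's, which leaves implicit the counting argument (a surjection between finite groups of equal order is an isomorphism) that you spell out.
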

\begin{proof}
    We want to use fact \ref{fct:image_of_decomp_group} to show this.

    Let us start by noting that $|D(\mathfrak{B})| = f$. In fact, by facts \ref{fct:decomp_field} and \ref{fct:decomp_in_cyclotomic_fields}, we have that $\frac{|Gal(L_q | \QQ)|}{|D(\mathfrak{B})|} = r = \frac{\phi(q)}{f}$. Since $|\Gal(L_q | \QQ)| = \phi(q)$, we obtain what we claimed.

    Notice now that, by definition of inertia degree, $\frac{\mathcal{O}_{L_q}}{\mathfrak{B}} = \FF_{p^f}$.

    Hence, since $\Gal(\FF_{p^f} \mid \FF_p) = \frac{\ZZ}{f \ZZ}$, we obtain what we wanted.
\end{proof}

Let us now prove a particular case of our main goal.

\begin{lemma}
    If $q = l^{\alpha}$ is a prime power, then $-1 \in D(\mathfrak{B})$ if and only if $f$ is even.
\end{lemma}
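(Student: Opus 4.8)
The plan is to reduce the statement to elementary group theory inside $(\ZZ/q\ZZ)^*$. Identify $\Gal(L_q\mid\QQ)$ with $(\ZZ/q\ZZ)^*$ in the usual way, $\sigma\mapsto a$ where $\sigma(\zeta_q)=\zeta_q^{a}$. Under this identification the complex conjugation $-1$ becomes the residue class $-1\bmod q$, and $D(\mathfrak{B})$ becomes the subgroup generated by the Frobenius at $\mathfrak{B}$, namely the class of $p$; by Proposition~\ref{prop:group_of_decomp} (or directly by Fact~\ref{fct:cyclotomic_inertia_degree}) this subgroup is cyclic of order $f$. So the claim to prove is simply: the class $-1$ lies in the order-$f$ cyclic subgroup $\langle\,\overline p\,\rangle\le(\ZZ/q\ZZ)^*$ if and only if $f$ is even.

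First I would use Fact~\ref{fct:group_of_units_power}: since $q=l^{\alpha}$ and $q$ is odd, $l$ is an odd prime, so $(\ZZ/q\ZZ)^*$ is cyclic of order $\phi(l^{\alpha})=l^{\alpha-1}(l-1)$, an \emph{even} number. I would then record the elementary fact that a finite cyclic group of even order contains exactly one element of order $2$. Since $q\ge 3$, the class $-1$ has order $2$ in $(\ZZ/q\ZZ)^*$, hence it is precisely this unique element of order $2$.

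With these in hand both implications are immediate. If $f$ is even, then the cyclic group $D(\mathfrak{B})$ of order $f$ contains an element of order $2$; being an element of order $2$ in the ambient cyclic group $(\ZZ/q\ZZ)^*$, it must equal $-1$, so $-1\in D(\mathfrak{B})$. Conversely, if $-1\in D(\mathfrak{B})$, then $D(\mathfrak{B})$ contains an element of order $2$, so $2$ divides $|D(\mathfrak{B})|=f$ by Lagrange.

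I do not anticipate a real obstacle here; the only place where the hypothesis $q=l^{\alpha}$ is used is in asserting that $(\ZZ/q\ZZ)^*$ is cyclic, and hence has a \emph{unique} element of order $2$. For composite $q$ that group can have several elements of order $2$, which is presumably why the prime-power case is isolated and treated first.
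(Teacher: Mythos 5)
Your proof is correct and follows essentially the same route as the paper: both directions come down to Lagrange for the ``only if'' and the existence plus uniqueness of the order-$2$ element in the cyclic group $(\ZZ/q\ZZ)^{\times}$ (via Fact~\ref{fct:group_of_units_power} and Proposition~\ref{prop:group_of_decomp}) for the ``if''. The only cosmetic difference is that you invoke cyclicity of $D(\mathfrak{B})$ to produce the order-$2$ element where the paper cites Sylow's theorem; the substance is identical.
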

\begin{proof}
    Suppose $f$ is odd. Since $-1$ is an element of order $2$, Proposition \ref{prop:group_of_decomp} tells us that $-1 \not\in D(\mathfrak{B})$.

    Now suppose $f$ is even. So, Sylow's Theorem and Proposition \ref{prop:group_of_decomp} says that $D(\mathfrak{B})$ has at least one element of order $2$. But since $\Gal(L_q | \QQ) = \left( \frac{\ZZ}{q \ZZ} \right)^{\times}$ is cyclic (fact \ref{fct:group_of_units_power}), it has only one element or oder $2$, namely $-1$.
\end{proof}

\begin{prop}
    If $(p,q) = 1$ then $f(\mathfrak{B}^+ | p)$ is the smallest positive integer $f^+$ such that $p^{f^+} \equiv \pm 1 \pmod{q}$.
\end{prop}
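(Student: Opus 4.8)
The plan is to reduce everything to Proposition~\ref{prop:f+_and_f} together with a clean description of when $-1 \in D(\mathfrak{B})$. The starting point is the standard fact that, since $(p,q)=1$, the prime $p$ is unramified in the abelian extension $L_q/\QQ$ and its decomposition group $D(\mathfrak{B})$ is the cyclic subgroup of $\Gal(L_q|\QQ) \cong (\ZZ/q\ZZ)^\times$ generated by the Frobenius automorphism $\sigma_p\colon \zeta_q \mapsto \zeta_q^{\,p}$, i.e.\ by the residue class of $p$ modulo $q$ (cf.\ \cite{washington1982}; this is consistent with Proposition~\ref{prop:group_of_decomp} and Fact~\ref{fct:cyclotomic_inertia_degree}, which give $|D(\mathfrak{B})| = f := f(\mathfrak{B}|p) = \ord(p \bmod q)$). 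Consequently $-1 \in D(\mathfrak{B})$ if and only if there is a positive integer $k$ with $p^{k} \equiv -1 \pmod{q}$.

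Next I would split into the two cases of Proposition~\ref{prop:f+_and_f}. In the first case, suppose $p^{k} \equiv -1 \pmod q$ has no solution, so that $-1 \notin D(\mathfrak{B})$ and $f(\mathfrak{B}^+|p) = f$. Then $p^{m} \equiv \pm 1 \pmod q$ forces $p^{m} \equiv 1 \pmod q$ for every positive integer $m$, so the least $m$ with $p^{m} \equiv \pm 1 \pmod q$ coincides with the least $m$ with $p^{m} \equiv 1 \pmod q$, which is $f$ by Fact~\ref{fct:cyclotomic_inertia_degree}. Hence $f(\mathfrak{B}^+|p) = f = f^{+}$, as desired.

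In the second case, suppose $p^{k} \equiv -1 \pmod q$ for some $k \geq 1$, so that $-1 \in D(\mathfrak{B})$ and $f(\mathfrak{B}^+|p) = f/2$. Since $D(\mathfrak{B})$ is cyclic of order $f$ (Proposition~\ref{prop:group_of_decomp}) and contains the order-$2$ element $-1$, the integer $f$ is even and the unique element of order $2$ in $D(\mathfrak{B}) = \langle p \bmod q\rangle$ is $p^{f/2} \bmod q$; therefore $p^{f/2} \equiv -1 \pmod q$, which shows $f^{+} \leq f/2$. Conversely, if $m \geq 1$ and $p^{m} \equiv \pm 1 \pmod q$, then $p^{2m} \equiv 1 \pmod q$, so $f \mid 2m$ and $m \geq f/2$. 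Thus $f^{+} = f/2 = f(\mathfrak{B}^+|p)$, completing the argument.

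The only genuinely delicate point is that the proof of the prime-power Lemma above does not carry over verbatim, since for composite $q$ the group $\Gal(L_q|\QQ) \cong (\ZZ/q\ZZ)^\times$ need not be cyclic and so need not have a unique element of order $2$. I would circumvent this by working inside $D(\mathfrak{B})$ itself, which \emph{is} cyclic by Proposition~\ref{prop:group_of_decomp}: it is the cyclicity of the decomposition group, not of the whole Galois group, that pins down its order-$2$ element as the class of $p^{f/2}$ and hence identifies it with $-1$ exactly when $-1$ lies in it. Everything else is a routine manipulation of orders modulo $q$.
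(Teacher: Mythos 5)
Your argument is correct, and it takes a genuinely different route from the paper's. The paper deduces the proposition from Proposition~\ref{prop:f+_and_f} combined with the preceding lemma, which asserts that $-1 \in D(\mathfrak{B})$ if and only if $f$ is even; but that lemma is stated and proved only for $q$ a prime power, since its proof needs $(\ZZ/q\ZZ)^\times$ to be cyclic so that it has a unique element of order $2$. You instead characterize $-1 \in D(\mathfrak{B})$ directly as the solvability of $p^{k} \equiv -1 \pmod{q}$, using only the cyclicity of $D(\mathfrak{B}) = \langle p \bmod q \rangle$ itself (Proposition~\ref{prop:group_of_decomp} together with the standard identification of the decomposition group of an unramified prime with the subgroup generated by Frobenius), and then settle each case of Proposition~\ref{prop:f+_and_f} by elementary order computations; the key observation that the unique order-$2$ element of the cyclic group $\langle p \rangle$ is $p^{f/2}$ does exactly the work that the paper's lemma was meant to do. What this buys you is the statement for arbitrary $q$ coprime to $p$, which is what the proposition actually asserts: the parity criterion fails for composite $q$ (for instance $q = 15$, $p = 2$ gives $f = 4$ even, yet $-1 \notin \langle 2 \rangle \subseteq (\ZZ/15\ZZ)^\times$ and the true value is $f^{+} = 4$, not $2$), so your proof in fact repairs a gap in the paper's own argument, which invokes the prime-power lemma without that hypothesis. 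The only point worth making explicit is that $q \geq 3$ (so $-1 \neq 1$ in $(\ZZ/q\ZZ)^\times$), which is guaranteed by the standing assumption that $q$ is odd and at least $2$.
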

\begin{proof}
    By Proposition \ref{prop:f+_and_f} and the previous lemma, we have that
    \[
        f(\mathfrak{B}^+ | p) = \left\{
                                    \begin{array}{lcl}
                                        f(\mathfrak{B} | p) & , & \textrm{ if } f(\mathfrak{B} | p) \textrm{ is odd} \\
                                        \frac{f(\mathfrak{B} | p)}{2} & , & \textrm{ if }  f(\mathfrak{B} | p) \textrm{ is even}.
                                    \end{array}
                                \right.
    \]

    The result now follows from fact \ref{fct:cyclotomic_inertia_degree}.
\end{proof}

We are finally ready to tackle the general case:
\begin{prop}
    \label{prop:inertia_degree_of_p}
    If $p$ is any prime number, than $f(\mathfrak{B}^+ | p)$ is the smallest positive integer $f^+$ such that $p^{f^+} \equiv \pm 1 \pmod{q'}$, where $2q = p^a q'$ and $(p,q') = 1$.
\end{prop}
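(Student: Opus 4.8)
The plan is to compute the decomposition and inertia subgroups of $\mathfrak{B}$ over $p$ inside $L_q/\QQ$ and then descend to the quadratic subextension $L_q^+ \subseteq L_q$. Since $q$ is odd, $L_q = \QQ(\zeta_q) = \QQ(\zeta_{2q})$ and $L_q^+ = \QQ(\zeta_{2q} + \zeta_{2q}^{-1})$; identify $\Gal(L_q | \QQ)$ with $\left( \frac{\ZZ}{2q\ZZ} \right)^{\times}$ via its action on $\zeta_{2q}$, and write $2q = p^a q'$ with $(p,q') = 1$, so that $\left( \frac{\ZZ}{2q\ZZ} \right)^{\times} \cong \left( \frac{\ZZ}{p^a\ZZ} \right)^{\times} \times \left( \frac{\ZZ}{q'\ZZ} \right)^{\times}$ by the Chinese Remainder Theorem. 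Let $D$ and $I$ be the decomposition and inertia subgroups at $\mathfrak{B}$, so that $I$ is normal in $D$, $[D:I] = f(\mathfrak{B} | p)$, and $D/I$ is cyclic. First I would pin down $D$ and $I$ using the standard description of ramification of $p$ in a cyclotomic field (e.g. \cite{washington1982}) — this goes slightly beyond Facts \ref{fct:decomp_in_cyclotomic_fields} and \ref{fct:cyclotomic_inertia_degree}, which only treat $p \nmid q$: the subfield $\QQ(\zeta_{p^a})$ is totally ramified over $p$, while $\QQ(\zeta_{q'})$ is unramified over $p$ and is precisely the inertia field, so that $I = \left( \frac{\ZZ}{p^a\ZZ} \right)^{\times} \times \{1\}$ and $D$ is the preimage, under the reduction $\left( \frac{\ZZ}{2q\ZZ} \right)^{\times} \to \left( \frac{\ZZ}{q'\ZZ} \right)^{\times}$, of the decomposition group of $p$ in $\QQ(\zeta_{q'}) | \QQ$; the latter is generated by the Frobenius $\zeta_{q'} \mapsto \zeta_{q'}^{p}$, i.e. is the cyclic subgroup $\langle \overline{p} \rangle \leq \left( \frac{\ZZ}{q'\ZZ} \right)^{\times}$, of order $f := f(\mathfrak{B} | p) = \ord_{q'}(p)$ by Fact \ref{fct:cyclotomic_inertia_degree}. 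Hence $D = \left( \frac{\ZZ}{p^a\ZZ} \right)^{\times} \times \langle \overline{p} \rangle$.

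Next I would descend along $L_q / L_q^+$, a Galois extension of degree $2$ with $\Gal(L_q | L_q^+) = \langle c \rangle$, where $c$ is complex conjugation; under the identification above $c = -1 \in \left( \frac{\ZZ}{2q\ZZ} \right)^{\times}$, i.e. $c = (-1,-1)$. Directly from the definitions, $D(\mathfrak{B} | \mathfrak{B}^+) = D \cap \langle c \rangle$ and $I(\mathfrak{B} | \mathfrak{B}^+) = I \cap \langle c \rangle$, so, since the inertia degree is multiplicative in towers,
\[
	f(\mathfrak{B}^+ | p) \; = \; \frac{f(\mathfrak{B} | p)}{f(\mathfrak{B} | \mathfrak{B}^+)} \; = \; \frac{f}{[\, D \cap \langle c \rangle : I \cap \langle c \rangle \,]}.
\]
Everything thus reduces to locating $c$ relative to $D$ and $I$: one has $c \in I$ if and only if $-1 \equiv 1 \pmod{q'}$, i.e. $q' \mid 2$; and $c \in D$ if and only if $-1 \in \langle \overline{p} \rangle$, i.e. $p^k \equiv -1 \pmod{q'}$ for some $k$.

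Then I would run the short case analysis. If $q' \mid 2$, the value $q'=1$ is impossible since $q \geq 2$ is odd, so $q' = 2$ (equivalently $q = p^a$ is an odd prime power); then $c \in I \subseteq D$, the index above is $1$, and $f(\mathfrak{B}^+ | p) = f = \ord_2(p) = 1$, the least $f^+$ with $p^{f^+} \equiv \pm 1 \pmod 2$. If $q' > 2$, then $c \notin I$, and there are two sub-cases. If $p^k \not\equiv -1 \pmod{q'}$ for all $k$, then $c \notin D$, the index is $1$, and $f(\mathfrak{B}^+ | p) = f = \ord_{q'}(p)$, which is the least $f^+$ with $p^{f^+} \equiv \pm 1 \pmod{q'}$ because $-1$ is never a power of $p$ modulo $q'$. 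If $p^k \equiv -1 \pmod{q'}$ for some $k$, then $c \in D \setminus I$, the index is $2$, and $f(\mathfrak{B}^+ | p) = f/2$; moreover the unique element of order $2$ of the cyclic group $\langle \overline{p} \rangle$ is $\overline{p}^{\,f/2} = -1$, so $p^{f/2} \equiv -1 \pmod{q'}$ while no smaller power of $p$ is $\equiv \pm 1 \pmod{q'}$, so $f/2$ is again the least such $f^+$. In every case $f(\mathfrak{B}^+ | p)$ is the least positive integer $f^+$ with $p^{f^+} \equiv \pm 1 \pmod{q'}$, which is the assertion.

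The step I expect to be the main obstacle is the first one: the facts recorded earlier describe splitting in cyclotomic fields only when $p \nmid q$, so one must bring in (and perhaps reprove) the ramified case — namely that the $p$-part $\QQ(\zeta_{p^a})$ of $L_q$ is totally ramified over $p$ and contributes no residue extension, while all of $f(\mathfrak{B} | p)$ is carried by $\QQ(\zeta_{q'})$. Granting that, the remainder is elementary manipulation of the cyclic group $\langle \overline{p} \rangle$ against the order-two subgroup $\langle c \rangle$. As a consistency check, when $p \nmid q$ the prime $p$ is unramified in $L_q$, so $I = \{1\}$ and $D = \langle \overline{p} \rangle$, and the argument reproduces Proposition \ref{prop:f+_and_f} together with the proposition preceding \ref{prop:inertia_degree_of_p}.
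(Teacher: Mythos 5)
Your proof is correct, but it is organized differently from the paper's. The paper's own proof is a one-line reduction: it first establishes the coprime case ($p\nmid q$) separately --- via the identity $f(\mathfrak{B}\mid\mathfrak{B}^+)\cdot r(L_q\mid\mathfrak{B}^+)=2$ and the criterion for $-1\in D(\mathfrak{B})$ --- and then deduces the general statement from total ramification of $p$ in $\QQ(\zeta_{p^a})$. You instead compute the full decomposition and inertia groups $D=\left(\frac{\ZZ}{p^a\ZZ}\right)^{\times}\times\langle\overline{p}\rangle$ and $I=\left(\frac{\ZZ}{p^a\ZZ}\right)^{\times}\times\{1\}$ inside $\left(\frac{\ZZ}{2q\ZZ}\right)^{\times}$ once and for all, and read off $f(\mathfrak{B}^+\mid p)=f/[\,D\cap\langle c\rangle:I\cap\langle c\rangle\,]$; the coprime case then drops out as the special case $a=0$ rather than being an input. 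Both arguments hinge on the same key fact (the $p$-part of the cyclotomic field is totally ramified and carries no residue extension), so the mathematical content is the same, but your route is self-contained and arguably more robust: by testing membership of $c$ in $D$ directly via ``$-1\in\langle\overline{p}\rangle$ in $\left(\frac{\ZZ}{q'\ZZ}\right)^{\times}$,'' you avoid the intermediate criterion ``$-1\in D(\mathfrak{B})$ iff $f$ is even,'' which the paper only justifies when $q$ is a prime power (it genuinely fails for composite moduli with non-cyclic unit group, e.g.\ $p=2$, $q=15$, where $f=4$ is even but $-1\notin\langle\overline{2}\rangle$). The trade-off is length: the paper's proof is one sentence given its preparatory lemmas, whereas yours must re-derive the structure of $D$ and $I$ in the ramified cyclotomic setting, which is exactly the step you correctly flag as the one requiring an external reference such as \cite{washington1982}.
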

\begin{proof}
    This follows from the previous proposition and the fact that $p$ is totally ramified in $L_p$ (Lemma 1.4 in \cite{washington1982}).
\end{proof}

\subsection{Computing $\left[ \overline{\Gamma_{q, \infty, \infty}} : \overline{\Gamma_{q, \infty, \infty}(\fp)} \right]$}

Let $\rho$ be the map defined in (\ref{eqn:rho_map}). We define
\[
\begin{array}{ccccc}
    \overline{\rho} & : & \overline{\Gamma_{q, \infty, \infty}} & \longrightarrow & \PSL_2(E) \\
                    &   & \overline{g} & \longmapsto    & \overline{\rho(g)}
\end{array}
\]
where $E = \frac{\ZZ[\lambda_q]}{\fp}$ and $\bar{ }$ denotes the image in $\PSL_2$. This map is well-defined because $\rho(-g) = - \rho(g)$.

Therefore, we have a commutative diagram

\centerline{
    \xymatrix{
        \Gamma_{q, \infty, \infty} \ar[rr]^{\rho \ \ } \ar@{->>}[d] & &  \SL_2(E) \ar@{->>}[d] \\
        \overline{\Gamma_{q, \infty, \infty}} \ar[rr]_{\overline{\rho} \ \ } & & \PSL_2(E)
    }
}

\begin{lemma}
    $\ker(\overline{\rho}) = \overline{\Gamma_{q, \infty, \infty}(\fp)}$.
\end{lemma}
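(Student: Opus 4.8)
The plan is to chase the commutative diagram. The statement asserts that the kernel of $\overline{\rho}$ is exactly the image in $\overline{\Gamma_{q,\infty,\infty}}$ of the congruence subgroup $\Gamma_{q,\infty,\infty}(\fp)$. One inclusion is essentially formal: if $g \in \Gamma_{q,\infty,\infty}(\fp)$, then by the remark following (\ref{eqn:rho_map}) we have $\rho(g) = I$, so $\overline{\rho}(\overline{g}) = \overline{\rho(g)} = \overline{I}$ is trivial in $\PSL_2(E)$; hence $\overline{\Gamma_{q,\infty,\infty}(\fp)} \subseteq \ker(\overline{\rho})$.

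For the reverse inclusion, suppose $\overline{g} \in \ker(\overline{\rho})$, i.e. $\overline{\rho(g)} = \overline{I}$ in $\PSL_2(E)$. This means $\rho(g) = \pm I$ in $\SL_2(E)$. If $\rho(g) = I$, then $g \in \ker(\rho) = \Gamma_{q,\infty,\infty}(\fp)$ and we are done. The only remaining case is $\rho(g) = -I$, i.e. $g \equiv -I \pmod{\fp}$. In this situation I must produce an element of $\Gamma_{q,\infty,\infty}(\fp)$ whose image in $\overline{\Gamma_{q,\infty,\infty}}$ equals $\overline{g}$. The natural candidate is $-g$: indeed $-g \equiv I \pmod{\fp}$, so $-g \in \Gamma_{q,\infty,\infty}(\fp)$, and $\overline{-g} = \overline{g}$ in $\PSL_2(\RR)$. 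This works \emph{provided} $-g$ actually lies in $\Gamma_{q,\infty,\infty}$ — equivalently, provided $-I \in \Gamma_{q,\infty,\infty}$.

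So the crux of the argument — and the step I expect to be the main obstacle — is deciding whether $-I \in \Gamma_{q,\infty,\infty}$. If it is, the proof is the two-line diagram chase above. If $-I \notin \Gamma_{q,\infty,\infty}$, then $\overline{\Gamma_{q,\infty,\infty}} \cong \Gamma_{q,\infty,\infty}$ (the quotient map $\Gamma_{q,\infty,\infty} \to \overline{\Gamma_{q,\infty,\infty}}$ is injective), and the case $\rho(g) = -I$ simply cannot occur: we would have $g \in \Gamma_{q,\infty,\infty}$ with $-g \notin \Gamma_{q,\infty,\infty}$, and $g$ itself is then not in $\Gamma_{q,\infty,\infty}(\fp)$, yet also $\overline{g} \notin \ker(\overline{\rho})$ unless... one needs to rule out that $g$ could nonetheless map to $\overline{I}$. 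Let me reorganize: if $-I \notin \Gamma_{q,\infty,\infty}$, then for $g \in \Gamma_{q,\infty,\infty}$, $\rho(g) = -I$ would force $g \neq \pm$(anything in the kernel), but we must check $g$ is not itself forced to be trivial. The clean way: when $-I \notin \Gamma_{q,\infty,\infty}$, the composite $\Gamma_{q,\infty,\infty} \hookrightarrow \SL_2(E) \to \PSL_2(E)$ has kernel $\Gamma_{q,\infty,\infty} \cap \{\pm I\} = \{I\}$ intersected with... no: its kernel is $\{g \in \Gamma_{q,\infty,\infty} : \rho(g) = \pm I\}$. I would argue that this equals $\Gamma_{q,\infty,\infty}(\fp)$ by showing $\rho(g) = -I$ is impossible for $g \in \Gamma_{q,\infty,\infty}$ when $\fp$ does not lie over $2$ — since $-I \ne I$ in $E$ then — using that $g \equiv -I \pmod \fp$ together with $g$ being a product of the generators; alternatively, invoke that this is exactly the content needed and is handled by inspecting whether $-I$ is expressible via $\gamma_1,\gamma_2,\gamma_3$. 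I would look to section 2 of \cite{takei2012} for the precise statement about $-I$, cite it, and then the two cases collapse to the diagram chase. The residual worry — the genuine obstacle — is the prime $\fp \mid 2$, where $-I = I$ in $E$ and the distinction evaporates, making the claim automatic.
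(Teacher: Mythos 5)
You have put your finger on exactly the right issue. Your easy inclusion and your reduction to the case $\rho(g)=-I$ coincide with the paper's argument; the paper then writes ``$\pm g\in\ker(\rho)=\Gamma_{q,\infty,\infty}(\fp)$'', which tacitly assumes that $-g$ lies in the domain of $\rho$, i.e.\ that $-I\in\Gamma_{q,\infty,\infty}$. That is precisely the step you isolate as the crux, so as a comparison you have reproduced the paper's proof and, in addition, correctly located the point it glosses over. The problem with your proposal is that it leaves both horns of your dichotomy unresolved, and in fact neither horn can be established, so the gap is genuine and not repairable by a citation.

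Concretely, for odd $q$ one checks $\gamma_1\gamma_2=\gamma_3^{-1}$ and $\gamma_1^q=I$ (the eigenvalues of $\gamma_1$ are $e^{\pm i\pi(q-1)/q}$), so $\Gamma_{q,\infty,\infty}=\langle\gamma_1,\gamma_2\rangle$ is a quotient of $\ZZ/q\ZZ\ast\ZZ$; since this group is finitely generated and residually finite, hence Hopfian, and since $\Gamma_{q,\infty,\infty}$ also surjects onto $\overline{\Gamma_{q,\infty,\infty}}\cong\ZZ/q\ZZ\ast\ZZ$ (the standard presentation of the $(q,\infty,\infty)$ triangle group), the projection $\Gamma_{q,\infty,\infty}\to\overline{\Gamma_{q,\infty,\infty}}$ is injective and $-I\notin\Gamma_{q,\infty,\infty}$. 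On the other hand, the case $\rho(g)=-I$ cannot be ruled out when $p$ is odd: by Lemma \ref{lem:quotient_gp} the image of $\rho$ is $\SL_2(\FF_5)$ or $\SL_2(E)$, and every subgroup of even order of $\SL_2(E)$ with $\car(E)\neq 2$ contains the unique involution $-I$ of $\SL_2(E)$. For such a $g$ one has $\overline{g}\in\ker(\overline{\rho})$ while $g\notin\Gamma_{q,\infty,\infty}(\fp)$ and $-g\notin\Gamma_{q,\infty,\infty}$, hence $\overline{g}\notin\overline{\Gamma_{q,\infty,\infty}(\fp)}$. So for odd $p$ the two groups in the statement differ by index $2$, and the discrepancy propagates to the indices in Theorem \ref{thm:quotient_gp}. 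Your diagram chase (and the lemma) is correct either when $\fp\mid 2$, as you observe, or after replacing the condition $M\equiv I$ in the definition of $\Gamma_{q,\infty,\infty}(\fp)$ by $M\equiv\pm I\pmod{\fp}$ --- equivalently, after taking $\ker(\overline{\rho})$ itself as the definition of the congruence subgroup of $\overline{\Gamma_{q,\infty,\infty}}$.
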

\begin{proof}
    Since $\ker(\rho) = \Gamma_{q, \infty, \infty}(\fp)$, it is clear that $\overline{\Gamma_{q, \infty, \infty}(\fp)} \subseteq \ker(\overline{\rho})$.

    Now, take $\overline{g} \in \ker(\overline{\rho})$, i.e., $\rho(g) = \pm I$ ($I$ is the identity matrix). Since $\rho(-g) = -\rho(g)$, we get $\pm g \in \ker(\rho) = \Gamma_{q, \infty, \infty}(\fp)$. So, $g \in \pm \Gamma_{q, \infty, \infty}(\fp)$. Hence, $\overline{g} \in \overline{\Gamma_{q, \infty, \infty}(\fp)}$.
\end{proof}

This shows that $\overline{\Gamma_{q, \infty, \infty}} \ / \ \overline{\Gamma_{q, \infty, \infty}(\fp)} \ \cong \ \img(\overline{\rho})$.

\begin{fact}
    \label{sl2_center}
    The center $Z(\SL_2(F))$ of $\SL_2(F)$ (where $F$ is any field) is equal to $\{ \pm I \}$. (cf. Corollary 2 of Result 9.8, Chapter 1 in \cite{suzuki1982})
\end{fact}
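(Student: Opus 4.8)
The plan is to verify directly that any matrix commuting with all of $\SL_2(F)$ is already forced to equal $\pm I$ by its commuting with just two particular unipotent elements, so that the cited corollary of \cite{suzuki1982} can be replaced (or supplemented) by a two‑line self‑contained argument. Concretely, I would use $u := \smtx{1}{1}{0}{1}$ and $w := \smtx{1}{0}{1}{1}$; both have determinant $1$, hence lie in $\SL_2(F)$ for every field $F$, and these are the only two group elements the argument needs.

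First I would write a prospective central element as $A = \smtx{a}{b}{c}{d} \in Z(\SL_2(F))$ and impose $Au = uA$. Comparing $(1,1)$-entries gives $a = a+c$, so $c = 0$; comparing $(1,2)$-entries then gives $a+b = b+d$, so $a = d$. Combined with $\det A = ad - bc = a^2 = 1$, this already shows $A = \smtx{a}{b}{0}{a}$ with $a^2 = 1$. Next I would impose $Aw = wA$: with $c = 0$ and $a = d$, comparing $(1,1)$-entries gives $a+b = a$, hence $b = 0$. Therefore $A = aI$ where $a$ is a square root of $1$ in the field $F$, so $a \in \{1,-1\}$, i.e.\ $A \in \{\pm I\}$. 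The reverse containment $\{\pm I\} \subseteq Z(\SL_2(F))$ is immediate since scalar matrices commute with every matrix.

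I expect no genuine obstacle: the argument is a short explicit computation, and, crucially, it is uniform in $F$ and requires no hypothesis on $\car F$. The one point worth flagging is the edge case $\car F = 2$, where $-I = I$ so that $\{\pm I\}$ collapses to $\{I\}$; the statement is still correct there, compatibly with $\SL_2(\FF_2) \cong S_3$ having trivial centre. If one prefers, the same conclusion also drops out of the $2$-transitivity of the $\SL_2(F)$-action on $\PP^1(F)$, but the elementary-matrix computation above is the most economical route.
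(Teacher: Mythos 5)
Your computation is correct and complete: commuting with $u=\left(\begin{smallmatrix}1&1\\0&1\end{smallmatrix}\right)$ forces $c=0$ and $a=d$, commuting with $w=\left(\begin{smallmatrix}1&0\\1&1\end{smallmatrix}\right)$ then forces $b=0$, and $\det A=a^2=1$ gives $a=\pm 1$ since $F$ is a field; the reverse inclusion and the $\car F=2$ degeneration are handled correctly. The paper offers no argument of its own here --- it states this as a Fact and cites Corollary 2 of Result 9.8, Chapter 1 of Suzuki --- so your two-matrix verification is simply a self-contained substitute for that reference, uniform in $F$ and requiring nothing beyond matrix multiplication.
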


\begin{fact}
    \label{finite_quotient}
    If $\mathfrak{a} \subseteq \ZZ[\lambda_q]$ is a non-zero ideal, then $\frac{\ZZ[\lambda_q]}{\mathfrak{a}}$ is finite.
\end{fact}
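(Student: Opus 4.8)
The plan is to observe that $R := \ZZ[\lambda_q]$ is, as an abelian group, free of finite rank, and then to use any nonzero element of $\mathfrak{a}$ to produce a finite-index subgroup contained in $\mathfrak{a}$.

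First I would record that $\lambda_q = \zeta_{2q} + \zeta_{2q}^{-1}$ is a sum of roots of unity, hence an algebraic integer; let $d$ be the degree of its minimal polynomial over $\QQ$. Then $R = \ZZ[\lambda_q]$ is generated as a $\ZZ$-module by $1, \lambda_q, \dots, \lambda_q^{d-1}$, so it is a finitely generated $\ZZ$-module, and since $R \subseteq \CC$ it is torsion-free, hence free of rank exactly $d$ over $\ZZ$. (Equivalently, $R$ is an order in the number field $\QQ(\lambda_q)$.)

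Next, given a nonzero ideal $\mathfrak{a} \subseteq R$, choose $\alpha \in \mathfrak{a}$ with $\alpha \neq 0$. Because $R$ is an integral domain, multiplication by $\alpha$ is an injective $\ZZ$-linear endomorphism of $R$, so $\alpha R$ is again a free $\ZZ$-module of rank $d$ sitting inside the rank-$d$ free module $R$. By the structure theorem for finitely generated abelian groups, $R/\alpha R$ is therefore finite; concretely, its order equals $\abs{N_{\QQ(\lambda_q)/\QQ}(\alpha)}$, the absolute value of the determinant of multiplication by $\alpha$, which is a nonzero integer. Finally, since $\alpha \in \mathfrak{a}$ we have $\alpha R \subseteq \mathfrak{a} \subseteq R$, so $R/\mathfrak{a}$ is a quotient of the finite group $R/\alpha R$ and is hence finite.

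I do not anticipate any genuine obstacle: the only point needing a word of care is that $\alpha R$ has full rank $d$ inside $R$, which is precisely the injectivity of multiplication by $\alpha$ and follows at once from $R$ being a domain. Alternatively one could dispatch the statement by citing the standard fact that in an order of a number field every nonzero ideal has finite index.
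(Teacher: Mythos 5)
Your argument is correct and is exactly the standard proof of this statement, which the paper records as a fact without proof: $\ZZ[\lambda_q]$ is an order (a full-rank free $\ZZ$-module) in $\QQ(\lambda_q)$, a nonzero $\alpha \in \mathfrak{a}$ gives a full-rank sublattice $\alpha R \subseteq \mathfrak{a}$ with $[R:\alpha R] = \abs{N_{\QQ(\lambda_q)/\QQ}(\alpha)} < \infty$, and $R/\mathfrak{a}$ is a quotient of $R/\alpha R$. Nothing is missing; the one point you flagged (full rank of $\alpha R$ via injectivity of multiplication in a domain) is handled correctly.
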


\begin{fact}
    \label{dihedral_center}
    If $n$ is odd, then $Z(D_{2n}) = \{ e \}$.
\end{fact}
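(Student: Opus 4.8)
The plan is to compute $Z(D_{2n})$ directly from the standard presentation $D_{2n} = \langle r, s \mid r^n = s^2 = e,\ s r s^{-1} = r^{-1} \rangle$, in which every element is uniquely either a rotation $r^k$ or a reflection $s r^k$ with $0 \le k \le n-1$. Since an element is central if and only if it commutes with the generating set $\{r, s\}$, it suffices to test these two commutation conditions on each of the two types of element.

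First I would treat the rotations. A power $r^k$ commutes with $r$ automatically, and $r^k s = s r^k$ is equivalent, after left-multiplying by $s^{-1}$ and using $s^{-1} r^k s = r^{-k}$, to $r^{-k} = r^k$, i.e. $r^{2k} = e$, i.e. $n \mid 2k$. This is the only point where the hypothesis is used: since $n$ is odd, $\gcd(2,n) = 1$, so $n \mid 2k$ forces $n \mid k$ and hence $r^k = e$. Thus the identity is the only central rotation.

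Next I would rule out the reflections. For $s r^k$ to commute with $r$ we need $r(s r^k) = (s r^k) r$, i.e. $r s r^k = s r^{k+1}$; rewriting the left-hand side using $rs = s r^{-1}$ gives $s r^{k-1} = s r^{k+1}$, hence $r^2 = e$, i.e. $n \mid 2$. For odd $n \ge 3$ this is impossible, so no reflection is central. (The degenerate case $n = 1$ gives $D_2 \cong \ZZ/2\ZZ$ and does not occur where the fact is applied; it can simply be excluded.) Combining the two cases yields $Z(D_{2n}) = \{e\}$.

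There is no genuine obstacle here — the argument is a short, routine computation. The only thing worth flagging is that the oddness of $n$ is exactly the input needed for the implication $n \mid 2k \Rightarrow n \mid k$; for even $n$ the element $r^{n/2}$ would be central, so the hypothesis is essential rather than cosmetic.
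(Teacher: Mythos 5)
Your proof is correct and is the standard direct computation from the presentation $\langle r,s \mid r^n=s^2=e,\ srs^{-1}=r^{-1}\rangle$; the paper states this as an unproved routine fact, so there is no competing argument to compare against. Your caveat about the degenerate case $n=1$ (where $D_2$ is abelian, so the statement fails as literally written) is fair but harmless for the paper's applications, where the dihedral groups in question arise with $n$ an odd divisor of $q\geq 3$.
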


\begin{lemma}
    \label{lem:quotient_gp_p=2}
    If $\fp$ is a prime ideal lying above $2 \ZZ$ and $q$ is odd, then $\img(\overline{\rho}) \cong D_{2s}$ (for some odd $s$ dividing $q$). Moreover, if $q$ is a prime, then $s = q$.
\end{lemma}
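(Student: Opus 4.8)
The plan is to exploit Theorem \ref{thm:dickson} once more, this time in characteristic $2$. Since $\fp$ lies above $2\ZZ$, the residue field $E = \ZZ[\lambda_q]/\fp$ has characteristic $2$, and the image $\img(\overline\rho)$ is a subgroup of $\PSL_2(E) = \SL_2(E)$ (as $-I = I$ in characteristic $2$). It is generated by the images of $\overline{\gamma_1},\overline{\gamma_2},\overline{\gamma_3}$ under $\overline\rho$. First I would record the orders of these generators modulo $\fp$: $\gamma_2$ has order $p = 2$, $\gamma_3$ has order $2$ (it is unipotent, $\gamma_3 = u_{\mu_q}$, and $\mu_q = 2 + \lambda_q \equiv \lambda_q \pmod{\fp}$), and $\gamma_1$ has order dividing... one must check: $\gamma_1$ has trace $-\lambda_q$, so $\overline{\gamma_1}$ is either the identity, unipotent, or has order prime to $2$; in any case $\gamma_1\gamma_2\gamma_3 = I$ in $\Gamma_{q,\infty,\infty}$ (this is the defining triangle-group relation, or can be checked directly), so $\img(\overline\rho)$ is generated by just $\overline{\rho(\gamma_2)}$ and $\overline{\rho(\gamma_3)}$, two elements of order $2$. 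A group generated by two involutions is dihedral.

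Next I would pin down which dihedral group by invoking Dickson. To apply Theorem \ref{thm:dickson} I must verify $\img(\overline\rho)$ has at least two Sylow $2$-subgroups: if it had only one, that normal Sylow-$2$ together with the generators being involutions would force $\img(\overline\rho)$ itself to be a $2$-group, hence (being generated by two involutions) of order dividing $4$ — and one checks $\overline{\rho(\gamma_2)}$ and $\overline{\rho(\gamma_3)}$ do not commute (their product $\overline{\rho(\gamma_2\gamma_3)}$ has trace $2 - \mu_q \equiv \lambda_q \pmod{\fp}$, which is nonzero since $\lambda_q$ is a unit at $\fp$ when $q$ is odd and $p = 2 \nmid q$), ruling out the cases where the image is too small; alternatively, being dihedral of order $>4$ it automatically has more than one Sylow-$2$. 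So Dickson applies, and since $p = 2$ the group is forced into case (i): dihedral of order $2s$ with $s$ odd. (Cases (iii),(iv) would make $\img(\overline\rho) \cong \SL_2(K)$ or its extension, which is not dihedral; case (ii) requires $p = 3$.) This gives $\img(\overline\rho) \cong D_{2s}$ for some odd $s$.

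To see $s \mid q$, note that the order of the image is governed by the order of the product $\overline{\rho(\gamma_2\gamma_3)}$, a rotation in the dihedral group, so $s = \ord(\overline{\rho(\gamma_2\gamma_3)})$. One computes $\gamma_2\gamma_3 = \begin{pmatrix} 0 & 1 \\ -1 & 2 \end{pmatrix}\begin{pmatrix} 1 & \mu_q \\ 0 & 1 \end{pmatrix} = \begin{pmatrix} 0 & 1 \\ -1 & 2 - \mu_q \end{pmatrix} = \begin{pmatrix} 0 & 1 \\ -1 & -\lambda_q \end{pmatrix}$, an element of trace $-\lambda_q$ and determinant $1$. Its order in $\SL_2(E)$ divides the order of the cyclic group it generates, which over the finite field $E$ is controlled by how $\lambda_q = \zeta_{2q} + \zeta_{2q}^{-1}$ reduces; the eigenvalues of this matrix are roots of $T^2 + \lambda_q T + 1 = 0$, i.e.\ (after substituting $T = -\zeta$) the reductions of $\zeta_{2q}^{\pm 1}$, which are $2q$-th roots of unity. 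Since $p = 2$, these are $q$-th roots of unity in $\bar E$ (as $\zeta_{2q} = -\zeta_q^{(q+1)/2}$ or similar, $2$-power roots being trivial in characteristic $2$), so their multiplicative order divides $q$; hence $s \mid q$. When $q$ is prime, $s$ is either $1$ or $q$, and $s = 1$ would force the image to have order $2$, contradicting the non-commutativity of $\overline{\rho(\gamma_2)}$ and $\overline{\rho(\gamma_3)}$ established above; therefore $s = q$.

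The main obstacle I anticipate is the bookkeeping around whether $\gamma_1$ genuinely drops out and, more importantly, nailing down $s$ precisely rather than just $s \mid q$: the clean statement "$s = \ord$ of the reduction of $\zeta_{2q}$" requires care because $\lambda_q$ may reduce into a proper subfield of $E$, and one must argue the rotation subgroup is exactly the cyclic group generated by $\gamma_2\gamma_3$, with order equal to (not just dividing) that multiplicative order — which in turn hinges on $p = 2$ killing the $2$-part of $2q$. The prime case is then a soft consequence. I would also double-check the trace computation $\tr(\gamma_2\gamma_3) = -\lambda_q$ and that $\lambda_q \not\equiv 0 \pmod \fp$ (true since $N(\lambda_q)$ is odd for odd $q$), which is what prevents the degenerate unipotent case.
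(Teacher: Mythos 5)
Your proof is correct and follows essentially the same route as the paper: both arguments reduce to the observation that the image is generated by elements satisfying dihedral relations (the paper writes $\rho(\gamma_1)\rho(\gamma_3)=\rho(\gamma_3)\rho(\gamma_1)^{-1}$ with $\rho(\gamma_3)^2=\rho(\gamma_1)^q=I$ and combines this with Dickson's theorem, while you use $\gamma_1\gamma_2\gamma_3=I$ to see the image is generated by the two involutions $\rho(\gamma_2),\rho(\gamma_3)$ whose product is $\rho(\gamma_1)^{-1}$ of order dividing $q$), and both finish with the same observations that $\mu_q\notin\fp$ and $\rho(\gamma_1)\neq I$. Your version has the minor advantage that the appeal to Dickson becomes optional, since "two involutions generate a dihedral group" plus $\ord(\rho(\gamma_1))\mid q$ already forces $D_{2s}$ with $s\mid q$ odd.
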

\begin{proof}
    Notice $\img(\overline{\rho}) = \overline{\img(\rho)}$.

    If $p = 2$, one can easily check that $\ord(\rho(\gamma_2)) = \ord(\rho(\gamma_3)) = 2$. Hence, by the first claim in the proof of corollary \ref{cor:group_gend}, $\rho(\Gamma_{q, \infty, \infty})$ has 2 distinct $2$-Sylow subgroups. Hence, $\rho(\Gamma_{q, \infty, \infty})$ is one of the groups listed in Theorem \ref{thm:dickson}.

    One can check that $\rho(\gamma_1) \rho(\gamma_3) = \rho(\gamma_3) \rho(\gamma_1)^{-1}$ and $\rho(\gamma_3)^2 = \rho(\gamma_1)^q = I$. Hence, since $\rho(\Gamma_{q, \infty, \infty}) = \langle \rho(\gamma_1) , \rho(\gamma_3) \rangle$, fact \ref{fct:general_presn} tells us that $\rho(\Gamma_{q, \infty, \infty})$ is a homomorphic image of $D_{2q}$. In particular, $|\rho(\Gamma_{q, \infty, \infty})| \mid 2q$. Since $q$ is odd, by Theorem \ref{thm:dickson}, $\rho(\Gamma_{q, \infty, \infty})$ can only be $D_{2n}$ (for some odd $n$) or $\SL_2(2)$. But, one can check that $\SL_2(2) = D_{2 \cdot 3}$. So, in any case, $\rho(\Gamma_{q, \infty, \infty}) \cong D_{2s}$ (for some odd $s$).

    Now, since $\car(E) = 2$, $I = -I$ in $\SL_2(E)$. Hence, $\PSL_2(E) = \SL_2(E)$. Thus, $\overline{\img(\rho)} \cong \img(\rho) \cong D_{2s}$. So, $2s \mid 2q$. Therefore, since $q$ is odd, $s \mid q$.

    Finally, since $\gamma_1 = \left( \begin{array}{cc} -\lambda_q & -1 \\ 1 & 0 \end{array} \right)$, $\rho(\gamma_1) \neq I$ (because $0 \not\equiv 1 \pmod{\fp}$). So, if $q$ is prime, $\ord(\rho(\gamma_1)) = q$ (because $\ord(\rho(\gamma_1)) \mid \ord(\gamma_1) = q$).
\end{proof}

\begin{lemma}
    \label{lem:quotient_gp}
    Suppose $\fp$ is a prime ideal lying above $p \ZZ$ with $p \geq 3$. Then, $\img(\overline{\rho})$ is isomorphic to
    \begin{enumerate}[(i)]
        \item $\PSL_2(\FF_{5})$, if $p = 3$ and $\mu_q^2 - 2 \in \fp$
        \item $\PSL_2(E)$, otherwise (where $E = \frac{\ZZ[\lambda_q]}{\fp}$)
    \end{enumerate}
\end{lemma}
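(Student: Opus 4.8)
The plan is to descend the statement of Corollary \ref{cor:group_gend} along the quotient map $\SL_2(E) \twoheadrightarrow \PSL_2(E)$. Recall that $\img(\overline{\rho}) = \overline{\img(\rho)}$, and $\img(\rho) = \rho(\Gamma_{q,\infty,\infty})$ is the subgroup of $\SL_2(E)$ generated by $\rho(\gamma_2)$ and $\rho(\gamma_3)$. Since $\gamma_2 = \smtx{0}{1}{-1}{2}$ and $\gamma_3 = \smtx{1}{\mu_q}{0}{1}$, the matrix $\rho(\gamma_2)$ is exactly $v = \smtx{0}{1}{-1}{2}$ over $E$ and $\rho(\gamma_3) = u_{\bar\mu_q}$ in the notation of Corollary \ref{cor:group_gend}, where $\bar\mu_q$ is the image of $\mu_q$ in $E$. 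So the first step is to check that the hypotheses of Corollary \ref{cor:group_gend} are met: one must confirm $\bar\mu_q \neq 0$ in $E$ (equivalently $\mu_q \notin \fp$), so that $z = \bar\mu_q$ is a legitimate nonzero element and $E = \FF_p(\bar\mu_q)$ — but here one should be careful: $E = \ZZ[\lambda_q]/\fp$ need not be generated over $\FF_p$ by $\bar\mu_q = 2 + \bar\lambda_q$; since $\bar\mu_q - 2 = \bar\lambda_q$ generates $E$ over $\FF_p$ iff $\bar\mu_q$ does, and $\lambda_q$ generates $\ZZ[\lambda_q]$, we do get $E = \FF_p(\bar\mu_q)$ provided $p \geq 3$ so that subtracting $2$ is harmless. (If $\mu_q \in \fp$ then $\bar\lambda_q = -2$, so $E = \FF_p$ and one handles this degenerate case directly; in fact $\mu_q \in \fp$ forces $\lambda_q \equiv -2$, and one can check this only happens for small $q$ or is excluded.)

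With the hypotheses verified, Corollary \ref{cor:group_gend} gives $\img(\rho) \cong \SL_2(\FF_5)$ when $p = 3$ and $\bar\mu_q^2 = 2$ (i.e. $\mu_q^2 - 2 \in \fp$), and $\img(\rho) \cong \SL_2(E)$ otherwise. The second step is to pass to $\PSL_2$. In general, if $G \leq \SL_2(E)$ then $\overline G = G/(G \cap \{\pm I\})$. In case (ii), $\img(\rho) = \SL_2(E)$ contains $-I$ (as $p \geq 3$ means $-I \neq I$), so $\overline{\img(\rho)} = \SL_2(E)/\{\pm I\} = \PSL_2(E)$, which is exactly (ii). In case (i), $\img(\rho) \cong \SL_2(\FF_5)$; by Fact \ref{sl2_center}, or rather by the structure of $\SL_2(\FF_5)$, its center is $\{\pm I\}$ of order $2$, and $-I \in \img(\rho)$, so $\overline{\img(\rho)} \cong \SL_2(\FF_5)/\{\pm I\} = \PSL_2(\FF_5)$, giving (i). The only subtlety is confirming that the copy of $\{\pm I\}$ inside $\img(\rho)$ maps to the copy of $\{\pm I\}$ inside $\SL_2(E)$ — but this is automatic since $\img(\rho)$ is literally a subgroup of $\SL_2(E)$ and $-I$ is the same matrix in both, so the quotient $\overline{\phantom{g}}$ (image in $\PSL_2(E)$) restricted to $\img(\rho)$ has kernel $\img(\rho) \cap \{\pm I\} = \{\pm I\}$.

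The main obstacle, as flagged above, is the bookkeeping around whether $\mu_q \in \fp$ and whether $E$ is genuinely of the form $\FF_p(z)$ with $z \neq 0$ as required by Corollary \ref{cor:group_gend} — one needs $p \geq 3$ to translate between $\bar\lambda_q$ and $\bar\mu_q = \bar\lambda_q + 2$ generating $E$, and to rule out $\bar\mu_q = 0$. Assuming $p \geq 3$ these are quick checks: $\mu_q \in \fp$ would give $\bar\lambda_q = -2 \in \FF_p$, so $E = \FF_p$ and $\rho(\gamma_2), \rho(\gamma_3) \in \SL_2(\FF_p)$, and one either invokes Corollary \ref{cor:group_gend} with $z = -2$ (nonzero in $\FF_p$ for $p \neq 2$, which holds) or treats it as a special case — either way the conclusion (ii) with $E = \FF_p$ persists, and (i) cannot occur since $(-2)^2 = 4 = 1 \neq 2$ in $\FF_3$. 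Everything else is a direct translation of the already-established Corollary \ref{cor:group_gend} through the central quotient.
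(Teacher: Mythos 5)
Your proposal follows the same route as the paper's proof: apply Corollary \ref{cor:group_gend} to $\img(\rho)=\langle \rho(\gamma_2),\rho(\gamma_3)\rangle$ (legitimate, since $\gamma_1\gamma_2\gamma_3=I$ shows $\gamma_2,\gamma_3$ already generate $\Gamma_{q,\infty,\infty}$), then pass to the central quotient. Your case (i) is in fact cleaner than the paper's: where the paper verifies $-I\in\img(\rho)$ by an explicit Sage computation, you get it abstractly, because a subgroup of $\SL_2(E)$ isomorphic to $\SL_2(\FF_5)$ contains an involution and, for $p$ odd, $-I$ is the \emph{only} involution in $\SL_2(E)$ (any $c$ with $c^2=I$, $c\neq I$, $\det c=1$ is diagonalizable with both eigenvalues $-1$). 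Spelling that sentence out would make the step airtight. The remark that $E=\FF_p(\overline{\mu_q})$ because $\overline{\mu_q}=\overline{\lambda_q}+2$ and $\overline{\lambda_q}$ generates $E$ is correct (and needs no restriction on $p$: adding an integer constant never changes the generated subfield).

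The one genuine error is your disposal of the degenerate case $\mu_q\in\fp$, where you conflate $\overline{\lambda_q}$ with $\overline{\mu_q}$. The matrix fed to Corollary \ref{cor:group_gend} is $\rho(\gamma_3)=u_{\overline{\mu_q}}$; if $\mu_q\in\fp$ this is the \emph{identity}, not $u_{-2}$, so the corollary cannot be invoked with $z=-2$. Worse, the relation $\gamma_1=(\gamma_2\gamma_3)^{-1}$ then gives $\rho(\gamma_1)=\rho(\gamma_2)^{-1}$, so $\img(\rho)=\langle\rho(\gamma_2)\rangle$ is cyclic of order $p$ and conclusion (ii) fails outright (e.g.\ $q=p=3$, $\fp=3\ZZ$, where $\mu_3=3$). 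Your claim that ``the conclusion (ii) with $E=\FF_p$ persists'' is therefore false. The saving grace, which you half-identify, is that $\mathrm{N}_{\QQ(\zeta_{2q})/\QQ}(\mu_q)=q^2$ (proved later in the paper), so $\mu_q\in\fp$ forces $p\mid q$; under the hypothesis $p\nmid q$, in force wherever the lemma is applied, the degenerate case is vacuous and your main argument goes through. For what it is worth, the paper's own proof also silently assumes $\mu_q\notin\fp$ when citing Corollary \ref{cor:group_gend}, so you deserve credit for noticing the hypothesis $z\neq 0$ at all --- you just resolved it with the wrong matrix entry.
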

\begin{proof}
    Notice $\img(\overline{\rho}) = \overline{\img(\rho)}$.

    \vspace\baselineskip

    If $p = 3$ and $mu_q^2 - 2 \in \fp$, fact \ref{finite_quotient} and corollary \ref{cor:group_gend} says that $\img(\rho) \cong \SL_2(\FF_{5})$. We have to prove that $\overline{\img(\rho)} \cong \PSL_2(\FF_{5})$. Notice that $\overline{\img(\rho)} = \frac{\img(\rho)}{ \{ \pm I \} \cap \img(\rho) }$.

    We can verify that $- I \in \img{\rho}$. In fact, there are only two cases to consider and they were computed explicitly using Sage \cite{sage}.

    So, $\overline{\img(\rho)} = \frac{\SL_2(\FF_{5})}{Z(\SL_2(\FF_{5}))} = \PSL_2(\FF_{5})$.

    \vspace\baselineskip

    Otherwise, fact \ref{finite_quotient} and corollary \ref{cor:group_gend} says that $\img(\rho) = \SL_2(E)$, i.e., $\rho$ is surjective. Hence, $\overline{\img(\rho)} = \PSL_2(E)$.
\end{proof}

\begin{thm}
\label{thm:quotient_gp}
    Let $q \geq 3$ be an odd integer and $\fp$ be a prime ideal of $\ZZ[\lambda_q]$ lying above $p \ZZ$ where $p \geq 2$. Moreover, let $G$ denote the Galois group of
    \[
		\begin{array}{ccccc}
			\varphi & : & X_{q, \infty, \infty}(\fp) & \longrightarrow & X_{q, \infty, \infty}.
		\end{array}
    \]
    \begin{enumerate}[(i)]
        \item If $p = 2$, then $G \cong D_{2s}$ (for some odd $s$ that divides $q$) and, hence, $\left[ \overline{\Gamma_{q, \infty, \infty}} : \overline{\Gamma_{q, \infty, \infty}(\fp)} \right] = 2s$. Moreover, if $q$ is prime, then $s = q$.
        \item If $p = 3$ and $\mu_q^2 - 2 \in \fp$, then $G \cong \PSL_2(\FF_{5})$ and, hence, $\left[ \overline{\Gamma_{q, \infty, \infty}} : \overline{\Gamma_{q, \infty, \infty}(\fp)} \right] = 60$;
        \item Otherwise, $G \cong \PSL_2(\ZZ[\lambda_q]/\fp)$ and, hence, $\left[ \overline{\Gamma_{q, \infty, \infty}} : \overline{\Gamma_{q, \infty, \infty}(\fp)} \right] = (p^m + 1) p^m (p^m - 1) / 2$.
    \end{enumerate}

    Moreover $\ZZ[\lambda_q] / \fp \cong \mathbb{F}_{p^m}$, where $m$ is the smallest positive integer such that $p^m \equiv \pm 1 \pmod{q'}$ ($2q = p^a q'$ with $\gcd(p,q') = 1$).
\end{thm}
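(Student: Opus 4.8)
The plan is to assemble ingredients that are already in place. First I would record that $\varphi$ is a Galois cover of compact Riemann surfaces whose Galois group is $\overline{\Gamma_{q, \infty, \infty}} / \overline{\Gamma_{q, \infty, \infty}(\fp)}$. Since $-I$ acts trivially on $\cH^*$, one has $X_{q, \infty, \infty} = \overline{\Gamma_{q, \infty, \infty}} \backslash \cH^*$ and $X_{q, \infty, \infty}(\fp) = \overline{\Gamma_{q, \infty, \infty}(\fp)} \backslash \cH^*$, and $\Gamma_{q, \infty, \infty}(\fp) = \ker \rho$ is normal in $\Gamma_{q, \infty, \infty}$, so its image $\overline{\Gamma_{q, \infty, \infty}(\fp)}$ is normal in $\overline{\Gamma_{q, \infty, \infty}}$; the standard description of deck groups of quotients of $\cH^*$ then identifies $G$ with $\overline{\Gamma_{q, \infty, \infty}} / \overline{\Gamma_{q, \infty, \infty}(\fp)}$, and the lemma $\ker(\overline{\rho}) = \overline{\Gamma_{q, \infty, \infty}(\fp)}$ upgrades this to $G \cong \img(\overline{\rho})$. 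In particular $\left[ \overline{\Gamma_{q, \infty, \infty}} : \overline{\Gamma_{q, \infty, \infty}(\fp)} \right] = |\img(\overline{\rho})| = |G|$, so the index assertions will follow once the group is known.

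Next I would read off the three cases from the structural lemmas. If $p = 2$, Lemma \ref{lem:quotient_gp_p=2} gives $\img(\overline{\rho}) \cong D_{2s}$ for some odd $s \mid q$ (with $s = q$ when $q$ is prime), and the index is $|D_{2s}| = 2s$; this is (i). For $p \geq 3$, Lemma \ref{lem:quotient_gp} applies: in case (ii), $p = 3$ and $\mu_q^2 - 2 \in \fp$, it gives $\img(\overline{\rho}) \cong \PSL_2(\FF_5)$, and since $p \neq 2$ the center $\{ \pm I \}$ of $\SL_2(\FF_5)$ has order $2$, so by Facts \ref{fct:sl2_order} and \ref{sl2_center}, $|\PSL_2(\FF_5)| = |\SL_2(\FF_5)| / 2 = (5+1) \cdot 5 \cdot (5-1)/2 = 60$. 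In the remaining case (iii) --- where necessarily $p \geq 3$ --- Lemma \ref{lem:quotient_gp} gives $\img(\overline{\rho}) \cong \PSL_2(E)$ with $E = \ZZ[\lambda_q]/\fp$, and the same center computation gives $|\PSL_2(E)| = (p^m + 1) p^m (p^m - 1)/2$, provided we know $|E| = p^m$.

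It then remains to identify the residue field $E = \ZZ[\lambda_q]/\fp$. Because $q$ is odd, $\QQ(\zeta_{2q}) = \QQ(\zeta_q) = L_q$, hence $\QQ(\lambda_q) = \QQ(\zeta_{2q} + \zeta_{2q}^{-1})$ is the maximal totally real subfield $L_q^+$ of $\QQ(\zeta_q)$, and $\ZZ[\lambda_q]$ is its full ring of integers $\mathcal{O}_{L_q^+}$. Thus $\fp$ is, up to the notational switch $\fp \leftrightarrow \mathfrak{B}^+$, a prime of $\mathcal{O}_{L_q^+}$ above $p$, so $E \cong \FF_{p^f}$ with $f = f(\mathfrak{B}^+ \mid p)$; by Proposition \ref{prop:inertia_degree_of_p} this $f$ is exactly the smallest positive integer $m$ with $p^m \equiv \pm 1 \pmod{q'}$, where $2q = p^a q'$ and $\gcd(p, q') = 1$. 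This yields $|E| = p^m$, which closes case (iii) and proves the final assertion.

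I do not expect a serious obstacle: the substance is already contained in Lemmas \ref{lem:quotient_gp_p=2} and \ref{lem:quotient_gp} (whose hypotheses, notably ``$\mu_q^2 - 2 \in \fp$'', are stated in exactly the form needed here) together with the order formula for $\PSL_2$ of a finite field. The only place that needs attention is the number-theoretic bookkeeping in the last step: reconciling the description of $\ZZ[\lambda_q]$ via the $2q$-th root $\zeta_{2q}$ with the cyclotomic setup $\mathcal{O}_{L_q^+}$ of the earlier section (phrased via the $q$-th root $\zeta_q$), so that Proposition \ref{prop:inertia_degree_of_p} can be applied, and confirming that the prime $\fp$ of the theorem corresponds to the prime $\mathfrak{B}^+$ there. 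Once that identification is in hand, the theorem follows by direct assembly.
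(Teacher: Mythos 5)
Your proposal is correct and follows essentially the same route as the paper: the Galois group is identified with $\overline{\Gamma_{q,\infty,\infty}}/\overline{\Gamma_{q,\infty,\infty}(\fp)} \cong \img(\overline{\rho})$, the three cases are read off from Lemmas \ref{lem:quotient_gp_p=2} and \ref{lem:quotient_gp} together with the order formula for $\PSL_2$ of a finite field, and the residue field is identified via Proposition \ref{prop:inertia_degree_of_p}. The paper's own proof is just a two-line citation of these same ingredients; your write-up merely makes the assembly (including the $\zeta_{2q}$ versus $\zeta_q$ bookkeeping) explicit.
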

\begin{proof}
    The theorem follows from Lemmas \ref{lem:quotient_gp_p=2} and \ref{lem:quotient_gp}.

    The fact that $\ZZ[\lambda_q] / \fp$ is a field with $p^m$ elements with $m$ as in the statement of the theorem follows from Proposition \ref{prop:inertia_degree_of_p}.
\end{proof}

\begin{rmk}
	Recall that, in the classical setting, the Galois group of
	\[
		X(p) \rightarrow X(1)
	\]
	is always $\PSL_2(\ZZ / p)$. The previous theorem shows that is not always the case for a general triangle group and, in fact, establishes exactly when that happens for the triangle groups $\Gamma_{q, \infty, \infty}$.
\end{rmk}

\begin{rmk}
	Note that item (iii) of the previous theorem was also deduced independently in Theorem 8.1 of \cite{clark&voight2011}.
\end{rmk}

\section{Genus of $X_{q, \infty, \infty}(\fp)$}

The genera of the curves associated to the Hecke triangle groups (that is, the groups $\Gamma_{2,r, \infty}$) were computed in \cite{llt2000}. We deal in a similar way with the triangle groups $\Gamma_{q, \infty, \infty}$.

\vspace{\baselineskip}

The proposition below paired with Theorem \ref{thm:quotient_gp} computes the genera of $X_{q, \infty, \infty}(\fp)$ for many ideals $\fp$.

\begin{prop}
\label{prop:genus}
    Suppose $q$ is an odd prime number and $\fp$ is a prime ideal of $\ZZ[\lambda_q]$ lying above $p \ZZ$. Suppose also that $p \neq q$. Then the genus of $X_{q, \infty, \infty}(\fp)$ is
    \[
    1 + \frac{\overline{\mu}}{2} \left( 1 - \frac{2}{p} - \frac{1}{q} \right),
    \]
    where $\overline{\mu} = \left[ \overline{\Gamma_{q, \infty, \infty}} : \overline{\Gamma_{q, \infty, \infty}(\fp)} \right]$.
\end{prop}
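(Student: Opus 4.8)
The plan is to compute the genus by applying the Riemann--Hurwitz formula to the Galois cover $\varphi : X_{q,\infty,\infty}(\fp)\to X_{q,\infty,\infty}$. First, since $\Gamma_{q,\infty,\infty}(\fp)=\ker\rho$ is normal in $\Gamma_{q,\infty,\infty}$, the subgroup $\overline{\Gamma_{q,\infty,\infty}(\fp)}$ is normal in $\overline{\Gamma_{q,\infty,\infty}}$, so $\varphi$ is a (connected) Galois cover of smooth projective curves of degree $\overline{\mu}=[\overline{\Gamma_{q,\infty,\infty}}:\overline{\Gamma_{q,\infty,\infty}(\fp)}]$ with group $G$. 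As $X_{q,\infty,\infty}=\overline{\Gamma_{q,\infty,\infty}}\backslash\cH^{*}\cong\PP^{1}$ has genus $0$, Riemann--Hurwitz gives
\[
 2\,g\bigl(X_{q,\infty,\infty}(\fp)\bigr)-2=-2\,\overline{\mu}+\sum_{Q}\frac{\overline{\mu}}{e_{Q}}\bigl(e_{Q}-1\bigr),
\]
where $Q$ runs over the branch points of $\varphi$ and, because the cover is Galois, each $Q$ has $\overline{\mu}/e_{Q}$ preimages all of ramification index $e_{Q}$.

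The next step is to identify the branch points and their indices. The triangle group $\Gamma_{q,\infty,\infty}$ has signature $(q,\infty,\infty)$: up to conjugacy in $\overline{\Gamma_{q,\infty,\infty}}$ there is a single maximal elliptic cyclic subgroup, of order $q$, generated by $\overline{\gamma_{1}}$, together with two inequivalent cusps, whose stabilizers are generated by the parabolic elements $\overline{\gamma_{2}}$ and $\overline{\gamma_{3}}$. Away from the three corresponding points of $\PP^{1}$ the action of $\overline{\Gamma_{q,\infty,\infty}}$ on $\cH$ is free; moreover $\overline{\Gamma_{q,\infty,\infty}(\fp)}$ is torsion-free, since if a conjugate of $\gamma_{1}^{k}$ with $1\le k\le q-1$ lay in $\ker\rho$ we would get $\rho(\gamma_{1})^{k}=I$, impossible because $\rho(\gamma_{1})=\left(\begin{smallmatrix}-\lambda_{q}&-1\\1&0\end{smallmatrix}\right)\neq\pm I$ and $q$ is prime, so $\ord(\rho(\gamma_{1}))=q$. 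Hence $\varphi$ is unramified outside the three special points, and the ramification index over the elliptic point is exactly $q$ (note $q\mid\overline{\mu}$ since $G$ contains an element of order $q$).

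For the two cusps I would use that $\gamma_{2}$ and $\gamma_{3}$ are unipotent: one checks $(\gamma_{2}-I)^{2}=0$ and $\gamma_{3}-I=\left(\begin{smallmatrix}0&\mu_{q}\\0&0\end{smallmatrix}\right)$, and that $\mu_{q}=2+\lambda_{q}=\zeta_{2q}^{-1}(1+\zeta_{2q})^{2}$ is, up to a unit of $\ZZ[\lambda_{q}]$, a power of the unique prime above $q$; this is exactly where the hypothesis $p\neq q$ is used, as it forces $\mu_{q}\notin\fp$. Consequently $\rho(\gamma_{i})$ is a nontrivial unipotent element of $\SL_{2}(\ZZ[\lambda_{q}]/\fp)$ for $i=2,3$, so $\gamma_{i}^{k}\equiv I\pmod{\fp}$ precisely when $p\mid k$; equivalently the stabilizer of each cusp in $\Gamma_{q,\infty,\infty}(\fp)$ is generated by the $p$-th power of its generator in $\Gamma_{q,\infty,\infty}$, and the ramification index over each cusp equals $p$.

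Substituting $e=q$ over the elliptic point and $e=p$ over each of the two cusps into the Riemann--Hurwitz identity yields
\[
 2g-2=-2\,\overline{\mu}+\overline{\mu}\Bigl(1-\tfrac{1}{q}\Bigr)+2\,\overline{\mu}\Bigl(1-\tfrac{1}{p}\Bigr)=\overline{\mu}\Bigl(1-\tfrac{2}{p}-\tfrac{1}{q}\Bigr),
\]
whence $g=1+\tfrac{\overline{\mu}}{2}\bigl(1-\tfrac{2}{p}-\tfrac{1}{q}\bigr)$, as claimed. The main obstacle is the middle part: correctly pinning down the ramification data, namely that $\overline{\Gamma_{q,\infty,\infty}(\fp)}$ is torsion-free (so there is no unexpected ramification on $\cH$) and that the cusp widths scale by exactly $p$, which in turn rests on the arithmetic input $\mu_{q}\notin\fp$. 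Once these are in hand the genus computation is just the bookkeeping of Riemann--Hurwitz for a Galois cover of $\PP^{1}$, with $\overline{\mu}$ supplied by Theorem \ref{thm:quotient_gp}.
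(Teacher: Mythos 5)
Your proof is correct and follows essentially the same route as the paper: apply Riemann--Hurwitz to the degree-$\overline{\mu}$ Galois cover of $\PP^1$, identify the branch locus as the two cusps and the single elliptic point, and show the ramification indices are $p$, $p$, $q$ using normality of $\Gamma_{q,\infty,\infty}(\fp)$ together with the arithmetic input $\mu_q\notin\fp$. The only (minor) divergence is how that last input is obtained --- you factor $\mu_q=\zeta_{2q}^{-1}(1+\zeta_{2q})^2$ as a unit times a power of the prime above $q$, whereas the paper computes $\mathrm{N}_{\QQ(\zeta_{2q})/\QQ}(\mu_q)=q^2$ --- but both arguments are equally valid and yield the same conclusion.
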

\begin{proof}
    To simplify notation, let us call $\Gamma = \Gamma_{q, \infty, \infty}$.

    We know that the map
    \[
    	\varphi : \Gamma(\fp) \backslash \mathcal{H}^* \longrightarrow \Gamma \backslash \mathcal{H}^*
    \]
    is holomorphic and has degree $\overline{\mu}$ (cf. Section 1.5 in \cite{shimura1994iat}). So we can use Riemann-Hurwitz formula to compute the genus $g$ of $\Gamma(\fp) \backslash \mathcal{H}^*$:
\[
\begin{array}{r c l}
    2g - 2 & = & \overline{\mu} (2 \cdot 0 - 2) + \mysum\limits_{P \in \Gamma(J) \backslash \mathcal{H}^*} (e_P - 1) \\
            & = &  - 2 \overline{\mu} + \mysum\limits_{P \in \Gamma(J) \backslash \mathcal{H}^*} (e_P - 1)
\end{array}
\]
where $e_P$ is the ramification index of $\varphi$ at $P$.

    By Proposition 1.37 in \cite{shimura1994iat}, we see that the only points $P$ which may have $e_P > 1$ are the points which are mapped to cusps or elliptic points.

    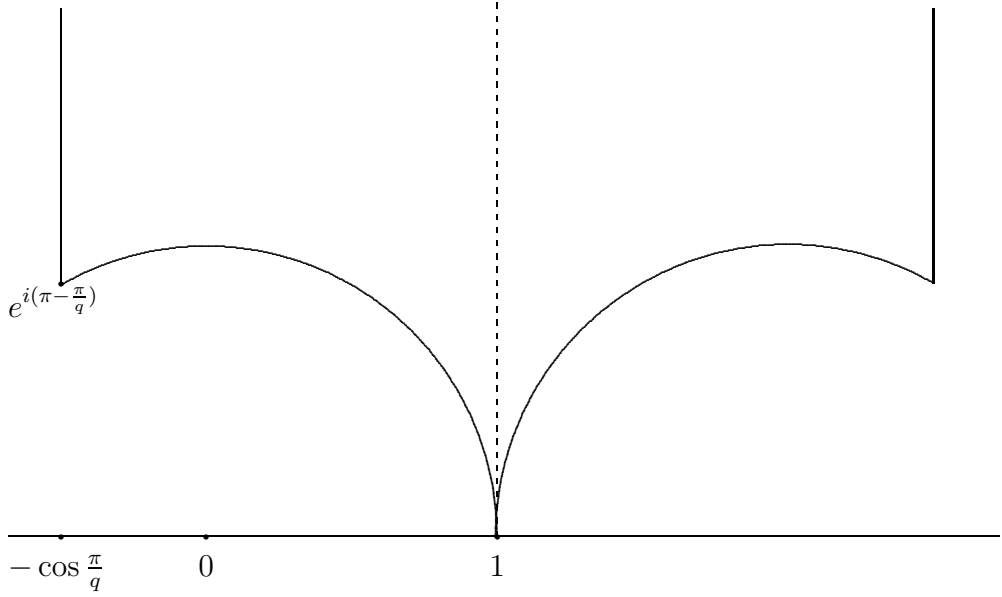
\begin{figure}[!h]
    \begin{center}
    \begin{picture}(375,215)(-75,-15)   
        \put(-75,0){\line(1,0){375}}

        \put(-3,-15){$0$}
        \put(0,0){\circle*{2}}
        \put(107,-15){$1$}
        \put(110,0){\circle*{2}}
        \put(-75, -15){$- \cos\frac{\pi}{q}$}
        \put(-55,0){\circle*{2}}

        \put(-75,83){$e^{i(\pi - \frac{\pi}{q})}$}
        \put(-55,95.5){\circle*{2}}

        \multiput(110,0)(0,5){41}{\line(0,1){2}}   

        \put(0,0){\arc(110,0){120}}
        \put(220,0){\arc(55,96){120}}

        \put(-55, 96){\line(0,1){104}}

        \put(275,96){\line(0,1){104}}
    \end{picture}
    \end{center}

    \caption{A fundamental region for $\Gamma_{q, \infty, \infty}$ \label{fig:fund_region}}
    \end{figure}

    By looking at a fundamental region of $\Gamma$ (figure \ref{fig:fund_region}) we see that this group has:
    \begin{enumerate}[(i)]
        \item 2 $\Gamma$-inequivalent cusps: $1$ and $\infty$
        \item 1 $\Gamma$-inequivalent elliptic point: $z_0 = e^{i(\pi - \frac{\pi}{q})}$.
    \end{enumerate}

    Moreover, it follows from the proof of Theorem 10.6.4 in \cite{beardon1983} that $\overline{\Gamma}_1 = \langle \overline{\gamma_2} \rangle$, $\overline{\Gamma}_{\infty} = \langle \overline{\gamma_3} \rangle$ and $\overline{\Gamma}_{z_0} = \langle \overline{\gamma_1} \rangle$. In particular, $|\overline{\Gamma_{z_0}}| = q$.

    \vspace\baselineskip

    Consider $\{ w_1, \dotsc, w_{k^{(1)}} \} = \varphi^{-1}(1)$ and let $e_1^{(1)}, \dotsc, e_{k^{(1)}}^{(1)}$ be their respective ramifications indices. Since $\Gamma(\fp) \unlhd \Gamma$, Proposition 1.37 in \cite{shimura1994iat} says that $e_1^{(1)} = \dotsb = e_k^{(1)} = \left[ \overline{\Gamma_1} : \overline{\Gamma(\fp)_1} \right]$ and $k^{(1)} e_1^{(1)} = \overline{\mu}$.

    $\overline{\Gamma}_1 = \langle \overline{\gamma_2} \rangle$ and $\overline{\Gamma(\fp)}_1 = \overline{\Gamma}_1 \cap \overline{\Gamma(\fp)}$. Since $\gamma_2^n = \left( \begin{array}{cc} - n + 1 & n \\ -n & n+1 \end{array} \right)$ and $\fp \cap \ZZ = p \ZZ$, then $\overline{\Gamma(\fp)}_1 = \langle \gamma_2^p \rangle$. So,
    \begin{equation}
        \label{eqn:1}
        e_1^{(1)} = \dotsb = e_k^{(1)} = p \textrm{ \ \ \ and \ \ \ } k^{(1)} = \frac{\overline{\mu}}{p}
    \end{equation}

    \vspace\baselineskip

    Let us now compute the ramification indices of $\varphi^{-1}(\infty)$. For this we need a claim (recall that $\mu_q = -2 - \zeta_{2q} - \zeta_{2q}^{-1}$):

    \vspace\baselineskip

    \begin{claim*}
        $\text{N}_{\QQ(\zeta_{2q}) / \QQ}(\mu_q) = q^2$.
    \end{claim*}

    In fact, notice that $\mu_q = - (1 + \zeta_{2q}) (1 + \zeta_{2q}^{-1})$. Since $q$ is odd, $-\zeta_{2q}$ is a primitive $q$-th root of unity. So, the minimal polynomial of $\zeta_{2q}$ is $\phi_{q}(-x)$, where $\phi_{q}$ is the $q$-th cyclotomic polynomial. So, the minimal polynomial of $1 + \zeta_{2q}$ is $h(x) = \phi_{q}(-(x-1)) = \phi_{q}(-x+1) = (-x+1)^{q-1} + \dotsb + (-x+1) + 1$. Thus, the constant term of $h$ is $q$. Hence, $\text{N}_{\QQ(\zeta_{2q}) / \QQ}(1 + \zeta_{2q}) = q$. Similarly, $\text{N}_{\QQ(\zeta_{2q}) / \QQ}(1 + \zeta_{2q}^{-1}) = q$. So, $\text{N}_{\QQ(\zeta_{2q}) / \QQ}(\mu_q) = (-1)^{q-1} \cdot \text{N}_{\QQ(\zeta_{2q}) / \QQ}(1 + \zeta_{2q}) \cdot \text{N}_{\QQ(\zeta_{2q}) / \QQ}(1 + \zeta_{2q}^{-1}) = q^2$.

    \vspace\baselineskip

    Hence, there exists $f(x) \in \ZZ[x]$ such that $\mu_q f(\mu_q) = q^2$ ($f(\mu_q)$ is the product of all the Galois conjugates of $\mu_q$ except for $\mu_q$ itself). Since $p \neq q$, this implies $\mu_q \not\in \fp$ (in fact, if $\mu_q \in \fp$, then $q^2 = \mu_q f(\mu_q) \in \fp$, which is impossible because $\fp \cap \ZZ = p \ZZ$).

    Now, since $\gamma_3^n = \left( \begin{array}{cc} 1 & n \mu_q \\ 0 & 1 \end{array} \right)$ and $\overline{\Gamma}_{\infty} = \langle \overline{\gamma_3} \rangle$, we see that $\overline{\Gamma(\fp)}_{\infty} = \langle \overline{\gamma_3}^p \rangle$. And, hence, $\left[ \overline{\Gamma}_{\infty} : \overline{\Gamma(\fp)}_{\infty} \right] = p$.

    Therefore, if $\{ v_1, \dotsc, v_{k^{(\infty)}} \} = \varphi^{-1}(\infty)$ and $e_1^{(\infty)}, \dotsc, e_{k^{(\infty)}}^{(\infty)}$ are their respective ramification indices, by Proposition 1.37 in \cite{shimura1994iat}, we get
    \begin{equation}
        \label{eqn:infty}
        e_1^{(\infty)} = \dotsb = e_{k^{(\infty)}}^{(\infty)} = p \textrm{ \ \ \ and \ \ \ } k^{(\infty)} = \frac{\overline{\mu}}{p}
    \end{equation}

    \vspace\baselineskip

    Now we shall compute the ramification indices of $\varphi^{-1}(z_0)$. We need to compute $\overline{\Gamma(\fp)}_{z_0}$. Since $\overline{\Gamma(\fp)}_{z_0} = \overline{\Gamma}_{z_0} \cap \overline{\Gamma(\fp)}$ and $\overline{\Gamma}_{z_0}$ has only elliptic elements (in addition to the identity), the next claim tells us that $|\overline{\Gamma(\fp)}_{z_0}| = 1$. Therefore, $\left[ \overline{\Gamma}_{z_0} : \overline{\Gamma(\fp)}_{z_0} \right] = q$.

    \vspace\baselineskip

    \begin{claim*}
        $\Gamma(\fp)$ has no elliptic element.
    \end{claim*}

    Since $z_0$ is the only inequivalent elliptic point and $\Gamma_{z_0} = \langle \overline{\gamma_1} \rangle$, we see that any elliptic element of $\Gamma$ is conjugate to some (non-trivial) power of $\gamma_1$. Since $\Gamma(\fp) \unlhd \Gamma$, if $\Gamma(\fp)$ contains an elliptic element, it would also contain some (non-trivial) power of $\gamma_1$. But since $\ord(\overline{\gamma}) = q$ is a prime, $\Gamma(\fp)$ would contain $\gamma_1$. But $\gamma_1 = \left( \begin{array}{cc} * & * \\ * & 0 \end{array} \right)$ and, hence, $\gamma_1 \notin \Gamma(\fp)$ ($1 \not\equiv 0 \pmod{\fp}$).

    \vspace\baselineskip

    Hence, if $\varphi^{-1}(z_0) = \{ y_1, \dotsc, y_{k^{(z_0)}} \}$ and $e_1^{(z_0)}, \dotsc, e_{k^{(z_0)}}^{(z_0)}$ are their respective indices, Proposition 1.37 in \cite{shimura1994iat} tells us that
    \begin{equation}
        \label{eqn:z0}
        e_0^{(z_0)} = \dotsb = e_{k^{(z_0)}}^{(z_0)} = q \textrm{ \ \ \ and \ \ \ } k^{(z_0)} = \frac{\overline{\mu}}{q}
    \end{equation}

    Using the Riemann-Hurwitz formula with the information given by (\ref{eqn:1}), (\ref{eqn:infty}) and (\ref{eqn:z0}) we get:
    \[
    \begin{array}{rcl}
        2g - 2 & = & -2 \overline{\mu} + \frac{\overline{\mu}}{p}(p-1) + \frac{\overline{\mu}}{p}(p-1) + \frac{\overline{\mu}}{q}(q-1) \\
                & = & \overline{\mu} \left( -2 + 2 - \frac{2}{p} + 1 - \frac{1}{q} \right) \\
                & = & \overline{\mu} \left(1 - \frac{2}{p} - \frac{1}{q} \right) .
    \end{array}
    \]
    Hence,
    \[
    g = 1 + \frac{\overline{\mu}}{2} \left( 1 - \frac{2}{p} - \frac{1}{q} \right) .
    \]
\end{proof}

\section{Genus of $X_{q, \infty, \infty}^{(0)}(\fp)$}

In this section, the genus of $X_{q, \infty, \infty}^{(0)}(\fp)$, where $\fp$ is a prime above $p$, is computed. It is assumed that $\overline{\Gamma_{q, \infty, \infty}} \big/ \overline{\Gamma_{q, \infty, \infty}(\fp)} \cong \PSL_2(\FF_{\fp})$ (which is always true when $q \geq 5$ according to Theorem \ref{thm:quotient_gp}) and $p \neq q$ are prime numbers strictly greater than $2$.

Using the Riemann-Hurwitz formula and the natural map
\[
	\varphi: X_{q, \infty, \infty}^{(0)}(\fp) \rightarrow X_{q, \infty, \infty}(1) = X_{q, \infty, \infty},
\]
it suffices to compute the ramification indices of $\varphi$.

Recall that
\[
    \left( \overline{\Gamma_{q, \infty, \infty}} \right)_{\infty} = \left\langle \gamma_3 = \begin{pmatrix} 1 & \mu_q \\ 0 & 1 \end{pmatrix} \right\rangle,
\]
\[
    \left( \overline{\Gamma_{q, \infty, \infty}} \right)_1 = \left\langle \gamma_2 = \begin{pmatrix} 0 & 1 \\ -1 & 2 \end{pmatrix} \right\rangle,
\]
\[
    \left( \overline{\Gamma_{q, \infty, \infty}} \right)_{z_0} = \left\langle \gamma_1 = \begin{pmatrix} -\lambda_q & -1 \\ 1 & 0 \end{pmatrix} \right\rangle.
\]

It can be shown, with the help of Proposition 1.37 in \cite{shimura1994iat}, that the monodromy of this map over
$\infty$ is given by the action of $\gamma_3$ on the set of cosets
$\overline{\Gamma_{q, \infty, \infty}} \left/ \overline{\Gamma_{q,
\infty, \infty}^{(0)}(\fp)} \right.$. Similarly, the monodromy of
this map over $1$ (resp. $z_0$) is given by the action of
$\gamma_2$ (resp. $\gamma_1$) on $\overline{\Gamma_{q, \infty,
\infty}} \left/ \overline{\Gamma_{q, \infty, \infty}^{(0)}(\fp)}
\right.$.

\begin{lemma}
    Let $\gamma \in \Gamma_{q, \infty, \infty}$. The action of $\gamma$ on $\overline{\Gamma_{q, \infty, \infty}} \left/ \overline{\Gamma_{q, \infty, \infty}^{(0)}(\fp)} \right.$ is equivalent to the action of $(\gamma \mod{\fp}) \in \PSL_2(\FF_{\fp})$ on $\PP^1(\FF_{\fp})$ via fractional linear transformations, i.e., the cycle decomposition of $\gamma$ (viewed as an element of the group of permutations of $\overline{\Gamma_{q, \infty, \infty}} \left/ \overline{\Gamma_{q, \infty, \infty}^{(0)}(\fp)} \right.$ is the same as the cycle structure of $(\gamma \mod{\fp})$ (viewed as an element of the group of permutations of $\PP^1(\FF_{\fp})$).
\end{lemma}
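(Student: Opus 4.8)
The plan is to identify the coset space $\overline{\Gamma_{q,\infty,\infty}} \big/ \overline{\Gamma_{q,\infty,\infty}^{(0)}(\fp)}$ with $\PP^1(\FF_\fp)$ in a $\overline{\Gamma_{q,\infty,\infty}}$-equivariant way, after which the claim about cycle decompositions is immediate. First I would recall that, by Theorem \ref{thm:quotient_gp} (under the running hypotheses $q\geq 5$, or more generally whenever we are in case (iii)), reduction mod $\fp$ induces an isomorphism $\overline{\rho}\colon \overline{\Gamma_{q,\infty,\infty}}\big/\overline{\Gamma_{q,\infty,\infty}(\fp)} \xrightarrow{\ \sim\ } \PSL_2(\FF_\fp)$. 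The subgroup $\overline{\Gamma_{q,\infty,\infty}^{(0)}(\fp)}$ is, by definition, the preimage under $\rho$ (resp. $\overline{\rho}$) of the subgroup $\overline{B}\subseteq \PSL_2(\FF_\fp)$ of upper-triangular matrices, i.e. the image in $\PSL_2$ of the Borel $B=\left\{\smtx{*}{*}{0}{*}\right\}$. Hence $\overline{\rho}$ descends to a bijection
\[
    \overline{\Gamma_{q,\infty,\infty}}\big/\overline{\Gamma_{q,\infty,\infty}^{(0)}(\fp)} \ \xrightarrow{\ \sim\ }\ \PSL_2(\FF_\fp)\big/\overline{B},
\]
and this bijection is equivariant for the left translation actions of $\overline{\Gamma_{q,\infty,\infty}}$ on the source and of $\PSL_2(\FF_\fp)$ on the target (the source-action factoring through $\overline{\rho}$).

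Next I would invoke the classical identification $\PSL_2(\FF_\fp)\big/\overline{B}\cong \PP^1(\FF_\fp)$: the Borel $\overline{B}$ is precisely the stabilizer of the point $[1:0]\in\PP^1(\FF_\fp)$ under the action of $\PSL_2(\FF_\fp)$ by fractional linear transformations, and this action is transitive on $\PP^1(\FF_\fp)$ (it has $p+1$ points, matching $[\PSL_2(\FF_\fp):\overline{B}]$ by Fact \ref{fct:sl2_order}). Composing the two equivariant bijections yields a $\overline{\Gamma_{q,\infty,\infty}}$-equivariant bijection
\[
    \Phi\colon\ \overline{\Gamma_{q,\infty,\infty}}\big/\overline{\Gamma_{q,\infty,\infty}^{(0)}(\fp)}\ \xrightarrow{\ \sim\ }\ \PP^1(\FF_\fp),
\]
where $\overline{\Gamma_{q,\infty,\infty}}$ acts on the right-hand side through $\overline{\rho}$, i.e. $\gamma$ acts as $(\gamma\bmod\fp)$ via fractional linear transformations. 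Given a $G$-equivariant bijection $\Phi$ between two $G$-sets, for any fixed $\gamma\in G$ the permutation of the first set induced by $\gamma$ is conjugate (via $\Phi$) to the permutation of the second set induced by $\gamma$; conjugate permutations have the same cycle type. Applying this with $G=\overline{\Gamma_{q,\infty,\infty}}$ and with the image of our fixed $\gamma\in\Gamma_{q,\infty,\infty}$ gives exactly the asserted equality of cycle decompositions.

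The only genuinely delicate point is bookkeeping in $\PSL_2$ versus $\SL_2$: one must check that $\overline{\Gamma_{q,\infty,\infty}^{(0)}(\fp)}$ really is the full $\overline{\rho}$-preimage of $\overline{B}$ and not a proper subgroup — equivalently, that $-I$ lies in the image of $\rho$ so that passing to $\PSL_2$ does not change indices. In case (iii) of Theorem \ref{thm:quotient_gp}, $\rho$ is surjective onto $\SL_2(\FF_\fp)$, so $-I\in\img(\rho)$ and the preimage of $\overline{B}$ under $\overline{\rho}$ is exactly the image of $\rho^{-1}(B)=\Gamma_{q,\infty,\infty}^{(0)}(\fp)$ in $\overline{\Gamma_{q,\infty,\infty}}$; this is where the standing assumption $\overline{\Gamma_{q,\infty,\infty}}\big/\overline{\Gamma_{q,\infty,\infty}(\fp)}\cong\PSL_2(\FF_\fp)$ is used. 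Everything else is a formal transport-of-structure argument.
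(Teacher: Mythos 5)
Your proposal is correct and follows essentially the same route as the paper: the paper likewise observes that $\overline{\Gamma_{q,\infty,\infty}}$ acts transitively on $\PP^1(\FF_\fp)$ through reduction mod $\fp$ and that the stabilizer of $\infty$ is $\overline{\Gamma_{q,\infty,\infty}^{(0)}(\fp)}$, then concludes by the standard equivalence of transitive $G$-sets with coset spaces. Your extra care about the $\SL_2$-versus-$\PSL_2$ bookkeeping is a welcome elaboration of a point the paper leaves implicit, but it does not change the argument.
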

\begin{proof}
    The action of $\overline{\Gamma_{q, \infty, \infty}}$ on $\PP^1(\FF_{\fp})$ via linear fractional transformations is transitive (since $\overline{\Gamma_{q, \infty, \infty}} / \overline{\Gamma_{q, \infty, \infty}(\fp)} \cong \PSL_2(\FF_{\fp})$). Moreover, the stabilizer of $\infty \in \PP^1(\FF_{\fp})$ is $\Gamma_{q, \infty, \infty}^{(0)}(\fp)$. Hence, by group theory, the action of $\overline{\Gamma_{q, \infty, \infty}}$ on $\PP^1(\FF_{\fp})$ is equivalent to the action of $\overline{\Gamma_{q, \infty, \infty}}$ on $\overline{\Gamma_{q, \infty, \infty}} \left/ \overline{\Gamma_{q, \infty, \infty}^{(0)}}(\fp) \right.$.
\end{proof}

\begin{lemma}
\label{lem:monodromy_over_oo}
    The monodromy over $\infty$ is given by
    \[
        (0)(1 \dotsb p)(p+1, \dotsb, 2p) \dotsb (p^{f-1} + 1, \dotsb, p^f),
    \]
    where $\FF_{\fp} = \FF_{p^f}$. So, $\varphi^{-1}(\infty) = \{ w_0, w_1, \dotsc, w_{p^{f-1}}\}$ and
    \[
        \begin{array}{ccc}
            e_{w_0} = 1 & \textrm{ and } & e_{w_i} = p \textrm{, for } 1 \leq i \leq p^{f-1}.
        \end{array}
    \]
\end{lemma}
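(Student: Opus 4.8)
The plan is to reduce the monodromy over $\infty$ to a completely elementary orbit count for a translation on $\PP^1$ over a finite field. By the preceding lemma, the action of $\gamma_3$ on $\overline{\Gamma_{q,\infty,\infty}}\left/\overline{\Gamma_{q,\infty,\infty}^{(0)}(\fp)}\right.$ has the same cycle structure as the action of $\overline{\gamma_3}:=(\gamma_3\bmod\fp)\in\PSL_2(\FF_\fp)$ on $\PP^1(\FF_{p^f})$ by fractional linear transformations; since $\gamma_3=\smtx{1}{\mu_q}{0}{1}$, this is the map $z\mapsto z+\overline{\mu_q}$, with $\infty$ as a fixed point. So it suffices to determine the cycle decomposition of this translation, and then to transport the answer back through the monodromy description and Proposition 1.37 of \cite{shimura1994iat}.

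First I would check that $\overline{\mu_q}\neq 0$ in $\FF_\fp$. This is the same fact already used in the proof of Proposition \ref{prop:genus}: from $\text{N}_{\QQ(\zeta_{2q})/\QQ}(\mu_q)=q^2$ one obtains $\mu_q f(\mu_q)=q^2$ for some $f(x)\in\ZZ[x]$, and, since $p\neq q$ and $\fp\cap\ZZ=p\ZZ$, this rules out $\mu_q\in\fp$. As $\FF_{p^f}$ has characteristic $p$, the additive order of the nonzero element $\overline{\mu_q}$ is exactly $p$. Hence $z\mapsto z+\overline{\mu_q}$ fixes $\infty$ and acts freely on the $p^f$ affine points, breaking $\FF_{p^f}$ into $p^{f-1}$ orbits of size $p$ (the cosets of the line $\FF_p\cdot\overline{\mu_q}$). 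Choosing a labelling of $\PP^1(\FF_{p^f})$ by $\{0,1,\dotsc,p^f\}$ in which $0$ is the fixed point $\infty$ and consecutive blocks of $p$ affine labels form orbits then yields the cycle decomposition in the statement.

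Finally, the description of the monodromy over $\infty$ recalled just before the lemma, together with Proposition 1.37 in \cite{shimura1994iat}, identifies $\varphi^{-1}(\infty)$ with the set of cycles of $\overline{\gamma_3}$ on $\PP^1(\FF_{p^f})$ and the ramification index at each preimage with the length of the corresponding cycle; this gives $\varphi^{-1}(\infty)=\{w_0,w_1,\dotsc,w_{p^{f-1}}\}$ with $e_{w_0}=1$ and $e_{w_i}=p$ for $1\le i\le p^{f-1}$. The only point requiring any care — and it is slight — is the verification that $\overline{\mu_q}\neq 0$; the remainder is just the standard fact that a nonzero translation of $\AA^1(\FF_{p^f})$ has all its orbits of size $p$.
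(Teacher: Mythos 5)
Your proof is correct and follows essentially the same route as the paper: both reduce to the action of $\gamma_3 \bmod \fp = \smtx{1}{\overline{\mu_q}}{0}{1}$ on $\PP^1(\FF_{p^f})$, invoke the nonvanishing of $\overline{\mu_q}$ established in the proof of Proposition \ref{prop:genus}, and observe that a nonzero translation in characteristic $p$ fixes $\infty$ and partitions the affine points into orbits of size $p$. Your remark that the orbits are the cosets of $\FF_p\cdot\overline{\mu_q}$ is a slightly more explicit phrasing of the same count, but the argument is identical.
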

\begin{proof}
    Notice that $(\gamma_3 \mod{\fp}) = \begin{pmatrix} 1 & \beta \\ 0 & 1 \end{pmatrix}$, where $\beta = (\mu_q \mod{\fp}) \in \FF_{\fp} \backslash \{ 0 \}$ (the fact that $\beta \neq 0$ is part of the proof of Proposition \ref{prop:genus}).

    Hence, $\infty \in \PP^1(\FF_{\fp})$ is fixed by $(\gamma_3 \mod{\fp})$. Furthermore, since
    \[
        (\gamma_3 \mod{\fp})^n = \begin{pmatrix} 1 & n \beta \\ 0 & 1 \end{pmatrix}
    \]
    and $\car(\FF_{\fp}) = p$, all other points of $\PP^1(\FF_{\fp})$ generate an orbit of size $p$.
\end{proof}

\begin{lemma}
\label{lem:monodromy_over_1}
    The monodromy over $1$ has the same cycle decomposition. So, $\varphi^{-1} = \{ w_0, w_1, \dotsc, w_{p^{f-1}}\}$ and
    \[
        \begin{array}{ccc}
            e_{w_0} = 1 & \textrm{ and } & e_{w_i} = p \textrm{, for } 1 \leq i \leq p^{f-1},
        \end{array}
    \]
    where $\FF_{\fp} = \FF_{p^f}$.
\end{lemma}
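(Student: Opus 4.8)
The plan is to mimic the proof of Lemma \ref{lem:monodromy_over_oo}: reduce the monodromy computation over the point $1$ to the cycle structure of $(\gamma_2 \bmod \fp)$ acting on $\PP^1(\FF_{\fp})$ by fractional linear transformations (this is justified by the preceding lemma on the equivalence of the two actions), and then analyze the permutation attached to $(\gamma_2 \bmod \fp)$ directly. First I would record that $\gamma_2 = \smtx{0}{1}{-1}{2}$ is unipotent over $\QQ$: its characteristic polynomial is $t^2 - 2t + 1 = (t-1)^2$, so it has trace $2$ and a single eigenvalue $1$, and it is not the identity. Since $\fp$ lies above $p \geq 3$, reduction mod $\fp$ preserves the trace and the property of being non-scalar, so $(\gamma_2 \bmod \fp)$ is again a non-identity unipotent element of $\PSL_2(\FF_{\fp})$, hence conjugate in $\PGL_2(\FF_{\fp})$ (indeed in $\PSL_2$, up to the usual square-class subtlety which does not affect cycle type) to a matrix of the form $\smtx{1}{\beta'}{0}{1}$ with $\beta' \in \FF_{\fp} \setminus \{0\}$.

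Next I would invoke the standard fact that conjugate permutations have the same cycle type: the action of $(\gamma_2 \bmod \fp)$ on $\PP^1(\FF_{\fp})$ is permutation-isomorphic to the action of $\smtx{1}{\beta'}{0}{1}$ on $\PP^1(\FF_{\fp})$. Then the argument of Lemma \ref{lem:monodromy_over_oo} applies verbatim: the point $\infty$ is the unique fixed point (concretely, the unique fixed point of $(\gamma_2 \bmod \fp)$ itself is the image of its eigenvector, namely $1 \in \PP^1(\FF_{\fp})$), and since $\smtx{1}{\beta'}{0}{1}^n = \smtx{1}{n\beta'}{0}{1}$ and $\car \FF_{\fp} = p$, every other point lies in an orbit of exact size $p$. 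This gives the cycle decomposition $(0)(1\cdots p)(p+1 \cdots 2p)\cdots(p^{f-1}+1 \cdots p^f)$ exactly as in Lemma \ref{lem:monodromy_over_oo}, and the ramification indices $e_{w_0} = 1$, $e_{w_i} = p$ for $1 \leq i \leq p^{f-1}$ follow because ramification index equals orbit size under the monodromy permutation.

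The one point requiring a little care — the main (modest) obstacle — is confirming that $(\gamma_2 \bmod \fp)$ is genuinely unipotent and nontrivial rather than, say, scalar or of order prime to $p$; this is where I use $p \geq 3$ together with $\det \gamma_2 = 1$ and $\tr \gamma_2 = 2 \not\equiv 0 \pmod p$, so that $(t-1)^2$ stays the characteristic polynomial and the matrix cannot become $\pm I$ mod $\fp$ (its lower-left entry $-1$ is nonzero mod $\fp$). Everything else is the observation that the count of orbits and their sizes depends only on the conjugacy class of the permutation, so the answer coincides with that of Lemma \ref{lem:monodromy_over_oo}. I would keep the write-up short by explicitly pointing back to the proof of Lemma \ref{lem:monodromy_over_oo} for the orbit-size computation once the reduction to the unipotent normal form is in place.
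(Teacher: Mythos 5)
Your proposal is correct, and it reaches the conclusion by a genuinely different route from the paper. You work entirely on the $\PP^1(\FF_{\fp})$ side: after the reduction to fractional linear transformations (via the preceding lemma, which you correctly invoke), you observe that $\gamma_2$ has characteristic polynomial $(t-1)^2$ and remains a non-scalar unipotent element mod $\fp$ (using $p\geq 3$ and the nonzero lower-left entry), hence is conjugate in $\PGL_2(\FF_{\fp})$ to $\smtx{1}{\beta'}{0}{1}$ with $\beta'\neq 0$; since conjugate permutations have the same cycle type, the orbit computation from Lemma \ref{lem:monodromy_over_oo} transfers verbatim. The paper instead avoids any conjugation: it only checks directly that $1$ is the unique fixed point of $(\gamma_2 \bmod \fp)$ on $\PP^1(\FF_{\fp})$, and then bounds the remaining ramification indices by a covering-space argument, factoring $\psi\colon X_{q,\infty,\infty}(\fp)\to X_{q,\infty,\infty}(1)$ through $\varphi$ and using multiplicativity of ramification together with the fact (from the proof of Proposition \ref{prop:genus}) that all ramification indices of $\psi$ over $1$ equal $p$, so that each $e_w$ divides $p$. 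Your argument is more self-contained and purely group-theoretic, needing nothing from Proposition \ref{prop:genus} or the tower of curves; the paper's argument is shorter on the linear-algebra side but imports external input. Both are complete, and your fixed-point identification (the eigenvector $(1,1)^T$ giving the point $1\in\PP^1(\FF_{\fp})$) matches the paper's claim.
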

\begin{proof}
    Notice that $(\gamma_2 \mod{\fp}) = \begin{pmatrix} 0 & 1 \\ -1 & 2 \end{pmatrix}$.

    It is easily seen that the only point of $\PP^1(\FF_{\fp})$ that is fixed by $(\gamma_2 \mod{\fp})$ is the point $1$.

    Now, consider the natural map
    \[
        \psi : X_{q, \infty, \infty}(\fp) \rightarrow X_{q, \infty, \infty}(1).
    \]
    Since $e_{v,g} = p$ for all $w = \psi^{-1}(1)$ (this is part of the proof of Proposition \ref{prop:genus}) and $\psi$ factors as
    \[
        \xymatrix{
            X_{q, \infty, \infty}(\fp) \ar[r] & X_{q, \infty, \infty}^{(0)}(\fp) \ar[r]^{\varphi} & X_{q, \infty, \infty}(1)
        },
    \]
    we have that $e_{w,f} = 1$ or $p$ for all $w \in \varphi^{-1}(1)$.

    The previous calculation says that there is only one point above $1$ having ramification degree $1$. Hence, the result follows.
\end{proof}

\begin{lemma}
    Let $\FF_{\fp} = \FF_{p^f}$ as before. The ramification behavior above $z_0$ is given as follows:
    \[
    	\varphi^{-1}(z_0) = \{ w_1, \dotsc, w_n, w_1', \dotsc, w_m'\},
    \]
    where
    \[
        \begin{array}{cccccc}
            e_{w_i} = q & , & e_{w_i'} = 1 & , & p^f + 1 = q n + m & , 
        \end{array}
    \]
    \[
        m = \left\{
                \begin{array}{ll}
                    0, & \textrm{if } p^f \equiv -1 \pmod{q} \\
                    2, & \textrm{if } p^f \equiv 1 \pmod{q},
                \end{array}
            \right.
    \]
\end{lemma}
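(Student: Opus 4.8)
The plan is to analyze the action of $(\gamma_1 \bmod \fp)$ on $\PP^1(\FF_{p^f})$ via fractional linear transformations, exactly as in the previous two lemmas, and read off the cycle structure from its order and fixed-point behavior. Recall $(\gamma_1 \bmod \fp) = \left(\begin{smallmatrix} -\lambda_q & -1 \\ 1 & 0 \end{smallmatrix}\right)$ in $\PSL_2(\FF_{p^f})$. From the genus computation for $X_{q,\infty,\infty}(\fp)$ we know that the image of $\gamma_1$ in the quotient group has order exactly $q$ (this used that $\gamma_1$ is not in $\Gamma(\fp)$ and that $q$ is prime); hence as a permutation of $\PP^1(\FF_{p^f})$ it decomposes into fixed points together with $q$-cycles. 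So if $m$ denotes the number of fixed points, then $p^f + 1 = qn + m$ where $n$ is the number of $q$-cycles, giving immediately $e_{w_i} = q$ and $e_{w_i'} = 1$ once we determine $m$.

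First I would count fixed points of $(\gamma_1 \bmod \fp)$ on $\PP^1(\FF_{p^f})$: a point $[x:1]$ is fixed iff $x$ satisfies the quadratic $x^2 + \lambda_q x + 1 = 0$ (coming from $\frac{-\lambda_q x - 1}{x} = x$), and $[1:0]$ is fixed iff the lower-left entry vanishes, which it does not since that entry is $1$. So $m$ equals the number of roots of $t^2 + \lambda_q t + 1$ in $\FF_{p^f}$, i.e., $m \in \{0, 2\}$ (the case $m=1$ would force a repeated root, hence $\gamma_1$ unipotent of order $p$, contradicting order $q$ with $p \neq q$). The eigenvalues of $(\gamma_1 \bmod \fp)$ are the primitive $2q$-th roots of unity $-\zeta_{2q}, -\zeta_{2q}^{-1}$ reduced mod $\fp$ — equivalently the primitive $q$-th roots of unity up to sign, since $\gamma_1$ has order $q$ in $\PSL_2$ — so $m = 2$ iff these eigenvalues lie in $\FF_{p^f}$, i.e., iff $\FF_{p^f}$ contains a primitive $q$-th root of unity, which holds iff $q \mid p^f - 1$, i.e., $p^f \equiv 1 \pmod q$; otherwise the eigenvalues lie in the quadratic extension and are Galois-conjugate, which (combined with the known containment $\FF_{p^f} \supseteq \FF_\fp$ and Theorem \ref{thm:quotient_gp}, under which $p^f \equiv \pm 1 \pmod{q}$) forces $p^f \equiv -1 \pmod q$ and $m = 0$.

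The cleanest way to organize the last dichotomy is via the characteristic polynomial $t^2 - (\tr \gamma_1) t + 1 = t^2 + \lambda_q t + 1$ together with the identity $\lambda_q = \zeta_{2q} + \zeta_{2q}^{-1}$: its splitting field over $\FF_p$ contains $\zeta_q$ (up to sign), and by Proposition \ref{prop:inertia_degree_of_p} / Theorem \ref{thm:quotient_gp} we are in the regime $p^f \equiv \pm 1 \pmod{q'}$ with $q' = q$ since $p \neq q$, so exactly one of the two congruences holds. When $p^f \equiv 1$ the polynomial splits over $\FF_{p^f}$ and $m = 2$; when $p^f \equiv -1$ it is irreducible over $\FF_{p^f}$ (its roots generate $\FF_{p^{2f}} \cap$ the $2q$-th roots of unity) and $m = 0$. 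The relation $p^f + 1 = qn + m$ then follows by counting $\PP^1(\FF_{p^f})$, and Proposition 1.37 in \cite{shimura1994iat} converts the cycle lengths into the ramification indices $e_{w_i} = q$, $e_{w_i'} = 1$.

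\textbf{The main obstacle} I anticipate is pinning down the eigenvalues of $(\gamma_1 \bmod \fp)$ precisely enough to know their field of definition — one must argue carefully that reduction mod $\fp$ sends the order-$q$ element $\gamma_1$ to an order-$q$ element (already established in the genus argument) and that its eigenvalues are genuinely primitive $q$-th roots of unity (up to sign) in $\overline{\FF_p}$, so that the fixed-point count is governed by the multiplicative order of $p$ mod $q$. Once that identification is in hand, everything else is a short finite-field computation and an application of Riemann–Hurwitz bookkeeping, so I would spend the proof's effort on making the eigenvalue/field-of-definition step airtight and treat the rest as routine.
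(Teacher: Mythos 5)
Your proposal is correct and reaches the same conclusion by the same overall skeleton (monodromy of $\varphi$ over $z_0$ equals the action of $\gamma_1 \bmod \fp$ on $\PP^1(\FF_{p^f})$, the image of $\gamma_1$ has order $q$, so the cycle type is $m$ fixed points plus $n$ $q$-cycles with $p^f+1=qn+m$), but you determine $m$ by a genuinely different argument. The paper never touches eigenvalues: it observes that a nonidentity fractional linear transformation has at most $2$ fixed points, rules out $m=1$ purely arithmetically (if $m=1$ then $p^f+1=qn+1$ forces $q\mid p^f$, impossible for distinct primes $p\neq q$), and then reads $m$ off from the congruence $m\equiv p^f+1\pmod q$ together with $0\le m\le 2<q$. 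You instead solve the fixed-point quadratic $t^2+\lambda_q t+1=0$, identify its roots as the reductions of $-\zeta_{2q}^{\pm1}$, i.e.\ primitive $q$-th roots of unity in $\overline{\FF}_p$ (using $p\nmid q$), and conclude $m=2$ iff $q\mid p^f-1$. Your route is more work and needs the care you flag (reducing modulo a prime of $\ZZ[\zeta_{2q}]$ above $\fp$, and checking the reduction of a primitive $q$-th root of unity stays primitive), but it buys an independent, structural explanation of why the dichotomy is governed by $p^f\equiv\pm1\pmod q$ rather than inheriting it from Theorem \ref{thm:quotient_gp}; the paper's congruence trick is shorter and avoids the field-of-definition analysis entirely. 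Two small slips in your write-up, neither fatal: $-\zeta_{2q}$ is a primitive $q$-th (not $2q$-th) root of unity since $q$ is odd, and in the notation $2q=p^aq'$ one has $q'=2q$ (not $q$) when $p\nmid 2q$, which still yields $p^f\equiv\pm1\pmod q$.
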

\begin{proof}
    Notice that $(\gamma_1 \mod{\fp}) = \begin{pmatrix} \beta & -1 \\ 1 & 0 \end{pmatrix}$ for some $\beta \in \FF_{\fp}$.

    As in the proof of the previous lemma, $e_w = 1$ or $q$ for any $w \in \varphi^{-1}(z_0)$.

    Let $n$ denote the number of points whose ramification degree is $q$ and let $m$ denote those whose ramification degree is $1$.

    Then $m$ is also the number of points in $\PP^1(\FF_{\fp})$ fixed by $(\gamma_1 \mod{\fp})$. Hence $m \leq 2$.

    Since $\deg(\varphi) = p^f + 1$,
    \[
        p^f + 1 = n q + m.
    \]

    Since $q$ and $p$ are distinct primes, it follows that $m \neq 1$. Taking the previous equality $\mod{q}$, the precise value of $m$ (in terms of $(p^f \mod{q})$) follows.
\end{proof}

\begin{prop}
\label{prop:genus_of_X^0}
    The genus of the curve $X_{q, \infty, \infty}^{(0)}(\fp)$ is given by
    \[
        g = \frac{(q-1)}{2} n - p^{f-1},
    \]
    where $n$ and $f$ are as in the previous lemma.
\end{prop}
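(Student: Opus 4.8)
The plan is to apply the Riemann--Hurwitz formula to the degree $p^f + 1$ map $\varphi : X_{q, \infty, \infty}^{(0)}(\fp) \to X_{q, \infty, \infty}$, using that the base curve $X_{q, \infty, \infty} \cong \PP^1$ has genus $0$, and that by Proposition 1.37 in \cite{shimura1994iat} the only possible ramification of $\varphi$ occurs above the two cusps $1, \infty$ and the elliptic point $z_0$. All the ramification data has been assembled in Lemmas \ref{lem:monodromy_over_oo}, \ref{lem:monodromy_over_1}, and the preceding lemma, so the proof is essentially a bookkeeping computation.

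Concretely, I would write
\[
    2g - 2 = (p^f + 1)(2 \cdot 0 - 2) + \sum_{P} (e_P - 1),
\]
where $P$ ranges over points of $X_{q, \infty, \infty}^{(0)}(\fp)$ lying above $1$, $\infty$, or $z_0$. Over $\infty$: by Lemma \ref{lem:monodromy_over_oo} there is one unramified point and $p^{f-1}$ points with $e_P = p$, contributing $p^{f-1}(p-1)$. Over $1$: by Lemma \ref{lem:monodromy_over_1} the contribution is identical, another $p^{f-1}(p-1)$. Over $z_0$: by the previous lemma there are $n$ points with $e_P = q$ and $m$ unramified points, contributing $n(q-1)$. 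Hence
\[
    2g - 2 = -2(p^f+1) + 2 p^{f-1}(p-1) + n(q-1).
\]

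It remains to simplify. Since $2 p^{f-1}(p-1) = 2p^f - 2p^{f-1}$, the first two terms combine to $-2(p^f+1) + 2p^f - 2p^{f-1} = -2 - 2p^{f-1}$, so
\[
    2g - 2 = -2 - 2 p^{f-1} + n(q-1),
\]
which gives $g = n(q-1)/2 - p^{f-1}$, as claimed. One should note that $n(q-1)$ is even: from $p^f + 1 = nq + m$ with $m \in \{0, 2\}$ one has $nq \equiv 1 - m \pmod 2$, hence $n$ is odd when $q$ is odd and $m$ is even, so $n(q-1)$ is visibly even (indeed $q-1$ is even), confirming $g \in \ZZ$.

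There is no real obstacle here; the only mild subtlety is being careful that the counts of points above each special point (not just the ramification indices) are the ones recorded in the lemmas, and that no other points of $X_{q, \infty, \infty}$ can ramify. Both are guaranteed by the cited Proposition 1.37 of \cite{shimura1994iat} together with the fact, established in the proof of Proposition \ref{prop:genus}, that $X_{q, \infty, \infty}$ has exactly the two cusps $1, \infty$ and the single elliptic point $z_0$. So the write-up is just the displayed computation above.

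\begin{proof}
By Proposition 1.37 in \cite{shimura1994iat}, the map $\varphi$ can only ramify above the cusps $1$, $\infty$ and the elliptic point $z_0$ of $X_{q, \infty, \infty}$. Recall $\deg \varphi = |\PP^1(\FF_{\fp})| = p^f + 1$ and that $X_{q, \infty, \infty} \cong \PP^1$ has genus $0$. By Lemmas \ref{lem:monodromy_over_oo} and \ref{lem:monodromy_over_1}, the fibre of $\varphi$ over $\infty$ (resp. over $1$) consists of one unramified point and $p^{f-1}$ points with ramification index $p$, so the contribution to $\sum_P (e_P - 1)$ from each of these two fibres is $p^{f-1}(p-1)$. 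By the previous lemma, the fibre over $z_0$ consists of $n$ points with ramification index $q$ and $m$ unramified points, contributing $n(q-1)$. Hence the Riemann--Hurwitz formula gives
\[
    2g - 2 = (p^f + 1)(-2) + 2 p^{f-1}(p-1) + n(q-1).
\]
Since $2 p^{f-1}(p-1) = 2 p^f - 2 p^{f-1}$, the first two terms combine to $-2 p^f - 2 + 2 p^f - 2 p^{f-1} = -2 - 2 p^{f-1}$, so
\[
    2g - 2 = -2 - 2 p^{f-1} + n(q-1),
\]
and therefore
\[
    g = \frac{(q-1)}{2} n - p^{f-1}.
\]
\end{proof}
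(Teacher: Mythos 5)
Your proof is correct and follows exactly the route the paper intends: the paper's own proof is a one-line appeal to Riemann--Hurwitz, the three ramification lemmas, and $g(X_{q,\infty,\infty}(1))=0$, and your computation is simply that argument written out in full (with the arithmetic checking out). Nothing further is needed.
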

\begin{proof}
    Follows from the Riemann-Hurwitz formula applied to $\varphi$, the previous three lemmas and the fact that $g(X_{q, \infty, \infty}(1)) = 0$.
\end{proof}

\bibliography{takei}
\bibliographystyle{alpha}

\end{document}